\def\thm@space@setup{%
  \thm@preskip=0.1in
  \thm@postskip=0in
}
\numberwithin{equation}{section}
\theoremstyle{plain}
\newtheorem{thm}{Theorem}[section]
\newtheorem{lem}[thm]{Lemma}
\newtheorem{prop}[thm]{Proposition}
\newtheorem{cor}[thm]{Corollary}
\theoremstyle{definition}
\newtheorem{defn}[thm]{Definition}
\newtheorem{conj}[thm]{Conjecture}
\newtheorem{rem}[thm]{Remark}
\theoremstyle{remark}
\DeclareMathOperator{\one}{\mathbbm{1}}
\DeclareMathOperator{\Prob}{Prob}
\DeclareMathOperator{\wt}{wt}
\DeclareMathOperator{\weight}{weight}
\DeclareMathOperator{\pr}{\sc{pr}}
\DeclareMathOperator{\fil}{fi}
\DeclareMathOperator{\com}{com}
\DeclareMathOperator{\bl}{bl}
\newcommand*\heavy{\includegraphics[width=0.1in]{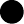}}
\newcommand*\light{\includegraphics[width=0.1in]{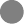}}
\begin{document}
\begin{center} {\Large{\sc Matrix Ansatz and combinatorics of the $k$-species PASEP}} \\
\vspace{0.1in}
Olya Mandelshtam
 \end{center}

\begin{abstract}
We study a generalization of the partially asymmetric exclusion process (PASEP) in which there are $k$ species of particles of varying weights hopping right and left on a one-dimensional lattice of $n$ sites with open boundaries. In this process, only the heaviest particle type can enter on the left of the lattice and exit from the right of the lattice. In the bulk, two adjacent particles of different weights can swap places. We prove a Matrix Ansatz for this model, in which different rates for the swaps are allowed. Based on this Matrix Ansatz, we define a combinatorial object which we call a $k$-rhombic alternative tableau, which we use to give formulas for the steady state probabilities of the states of this $k$-species PASEP. We also describe a Markov chain on the 2-rhombic alternative tableaux that projects to the 2-species PASEP.
\end{abstract}

\tableofcontents


\section{Introduction}
The (single-species) partially asymmetric exclusion process (PASEP) is an important non-equilibrium model that has generated interest in many areas of mathematics. Partly this is due to the existence of an exact solution (i.e.\ there are explicit formulas) for the stationary distribution for this process, which makes it a useful example in the study of non-equilibrium processes. Moreover, the PASEP has a rich combinatorial structure. Most notably, Corteel and Williams \cite{cw2007} gave a beautiful combinatorial interpretation of the stationary distribution of the PASEP in terms of certain tableaux called permutation tableaux, which are in bijection with several related objects, such as alternative tableaux, staircase tableaux, tree-like tableaux.

The single-species PASEP describes the dynamics of particles hopping on a finite one-dimensional lattice on $n$ sites with open boundaries, with the rule that there is at most one particle in a site, and at most one particle hops at a time. Figure \ref{PASEP_parameters} shows the parameters of this process, with the Greek letters denoting the rates of the hopping particles.

\begin{figure}
\centering
\includegraphics[width=0.35\textwidth]{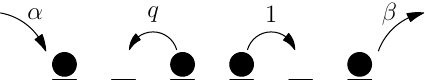}
\caption{The parameters of the PASEP.}
\noindent
\label{PASEP_parameters}
\end{figure}

More precisely, if we denote the particle by $d$ and the hole (or absence of a particle) by $e$, and let $X$ and $Y$ be any words in $\{e,d\}$ then the transitions of this process are:
\[ XdeY \overset{1}{\underset{q}{\rightleftharpoons}} XedY \qquad eX \overset{\alpha}{\rightharpoonup} dX \qquad \qquad Xd \overset{\beta}{\rightharpoonup} Xe\]
where by $X \overset{u}{\rightharpoonup} Y$ we mean that the transition from $X$ to $Y$ has probability $\frac{u}{n+1}$, $n$ being the length of $X$ (and also $Y$).

The $k$-species PASEP is a generalization of the single-species PASEP, where we now have $k$ particle types of varying weight hopping on a finite one-dimensional lattice of $n$ sites. We call the heaviest particle type $d$, the hole (or absence of any particles) type $e$, and the rest of the particle types from heaviest to lightest, type $a_1$ through type $a_{k-1}$. A heavier particle type can swap places with a lighter particle type with rate 1 if the heavier particle is on the left, and rate $q_i$ if the heavier particle is on the right, where $q_i$ is fixed to be some parameter out of a list of parameters $\{q_i\}$ (see Section \ref{k-species} for the precise rules). Like in the single-species PASEP, particles of type $d$ can enter on the left of the lattice and exit on the right of the lattice. Note that for $q_i=q$ for all $i$, when $k=1$, we recover the original single-species PASEP, and when $k=2$, we obtain the two-species PASEP, which has been studied, among others, by Uchiyama \cite{uchiyama}, Schaeffer \cite{schaeffer}, and Ayyer \cite{ayyer}. 

Derrida et. al. \cite{derrida} provided a Matrix Ansatz solution for the stationary distribution of the single-species PASEP. The Matrix Ansatz is a theorem that expresses the steady state probabilities of a process in terms of a certain matrix product for some matrices that satisfy some conditions that are determined by the process. Uchiyama provided a similar Matrix Ansatz solution, along with corresponding matrices, for the two-species PASEP. In this paper, we extend the result of Uchiyama to a Matrix Ansatz for a type of $k$-species PASEP in Theorem \ref{ansatz3}.

\begin{figure}[h]
\centering
\includegraphics[width=\linewidth]{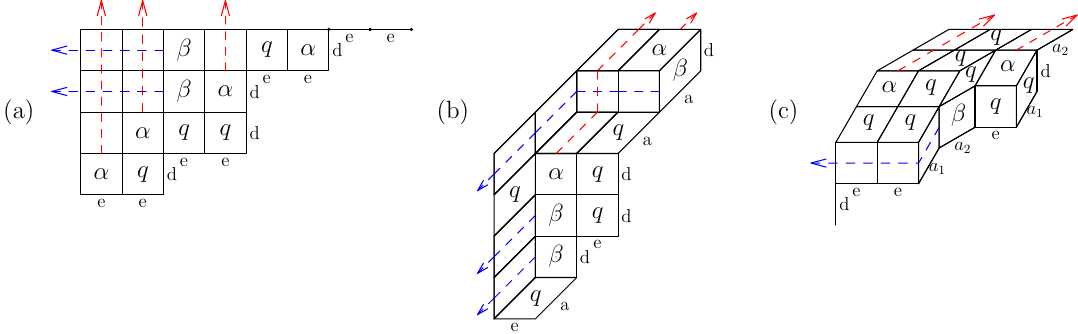}
\caption{(a) an alternative tableau, (b) a RAT, and (c) a 3-RAT}
\label{tableaux_samples}
\end{figure}

The tableaux of Corteel and Williams that provided a combinatorial interpretation for the stationary distribution of the single-species PASEP were generalized in \cite{omxgv} to certain tableaux called rhombic alternative tableaux (RAT) for the two-species PASEP. In this paper, we present even more general tableaux called the $k$-rhombic alternative tableaux ($k$-RAT), that give a combinatorial interpretation in Theorem \ref{kRAT_main} for the stationary distribution of the $k$-species PASEP. Figure \ref{tableaux_samples} shows examples of the alternative tableaux, the rhombic alternative tableaux, and the $k$-rhombic alternative tableaux. Note that 2-RAT are the same as RAT, and 1-RAT are the original alternative tableaux.

A common strategy to prove the combinatorial formula for the tableaux is to use the Matrix Ansatz, as in \cite{cw2007}. Another interesting and more direct proof of this formula is to explicitly construct a Markov chain on the tableaux, and show that it projects to the Markov chain of the underlying process, as in \cite{cw_mc}. As our final result, we construct such a Markov chain on the RAT that projects to the two-species PASEP.

Our paper is organized as follows. In Section \ref{2PASEP_sec}, we describe the two-species PASEP. In Section \ref{matrix_sec} of this paper, we provide a proof for Theorem \ref{prob} by explicitly defining the matrices that both provide the weight generating function of the RAT, and also satisfy the Matrix Ansatz hypothesis. 

In the second half of the paper, we describe a generalization of the two-species PASEP process to a $k$-species PASEP, and a generalization of the RAT to the $k$-RAT. Our proofs are analogous to the proofs for the two-species case. In Section \ref{k-species} of this paper, we introduce the $k$-species PASEP. In Section \ref{ansatz_sec}, we provide a Matrix Ansatz theorem that describes the stationary probabilities of this process as certain matrix products. In Section \ref{tableaux_sec}, we define the $k$-RAT, which provides an interpretation for the stationary probabilities of the $k$-species PASEP. Finally, in Section \ref{mc_sec}, we describe a Markov chain on the RAT that projects to the PASEP.

\textbf{Acknowledgement.} The author gratefully acknowledges Lauren Williams for her mentorship, and also Sylvie Corteel and Xavier Viennot for many fruitful conversations. The author was partially supported by the France-Berkeley Fund, the NSF grant DMS-1049513, and the NSF grant DMS-1704874.


\section{Previous results on the two-species PASEP}\label{2PASEP_sec}

First we describe the two-species PASEP and the associated rhombic alternative tableaux to motivate the more general $k$-species process that we study in this paper. 

The two-species partially asymmetric exclusion process (PASEP) has been studied extensively as an interesting generalization to the single-species PASEP. The two-species PASEP has two species of particles, one ``heavy'' and one ``light''. The ``heavy'' particle can enter the lattice on the left with rate $\alpha$, and exit the lattice on the right with rate $\beta$. Moreover, the ``heavy'' particle can swap places with both the hole and the ``light'' particle when they are adjacent, and the ``light'' particle can swap places with the hole. Each of these possible swaps occur at rate 1 when the heavier particle is to the left of the lighter one, and at rate $q$ when the heavier particle is to the right (we simplify our notation by treating the hole as a third type of ``particle''). The parameters of the two-species PASEP are shown in Figure \ref{2PASEP_parameters}, where the \heavy\ represents the ``heavy'' particle, and the \light\ represents the ``light'' particle.

\begin{figure}[h]
\centering
\includegraphics[width=0.6\linewidth]{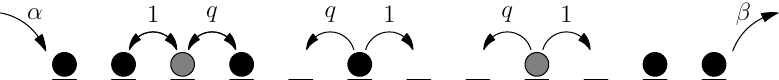}
\caption{The parameters of the two-species PASEP.}
\label{2PASEP_parameters}
\end{figure}

More precisely, if we denote the ``heavy'' particle by $d$, the ``light'' particle by $a$, and the hole by $e$, and let $X$ and $Y$ be any words in $\{d, a, e\}$ then the transitions of this process are:
\[  XdeY \overset{1}{\underset{q}{\rightleftharpoons}} XedY  \qquad  XaeY \overset{1}{\underset{q}{\rightleftharpoons}} XeaY \qquad XdaY \overset{1}{\underset{q}{\rightleftharpoons}} XadY\]

\[eX \overset{\alpha}{\rightharpoonup} dX \qquad \qquad Xd \overset{\beta}{\rightharpoonup} Xe\]
where by $X \overset{u}{\rightharpoonup} Y$ we mean that the transition from $X$ to $Y$ has probability $\frac{u}{n+1}$, $n$ being the length of $X$ (and also $Y$).

Notice that since only the ``heavy'' particle can enter or exit the lattice, the number of ``light'' particles must stay fixed. In particular, if we fix the number of ``light'' particles to be 0, we recover the original PASEP. 

Uchiyama provides a Matrix Ansatz along with matrices that satisfy the conditions, to express the stationary probabilities of the 2-species PASEP as a certain matrix product.

\begin{thm}[\cite{uchiyama}]\label{ansatz}
Let $W = W_1\ldots W_n$ with $W_i \in \{d, a, e\}$ for $1 \leq i \leq n$ represent a state of the two-species PASEP of length $n$ with $r$ ``light'' particles. Suppose there are matrices $D$, $E$, and $A$ and vectors $\langle w|$ and $|v \rangle$ which satisfy the following conditions
\[
DE= D+E+qED \qquad DA = A + qAD \qquad AE = A + qEA
\]
\[
\langle w| E = \frac{1}{\alpha} \langle w| \qquad D|v \rangle= \frac{1}{\beta} |v \rangle
\]
then 
\[
\Prob(W) = \frac{1}{Z_{n,r}} \langle w| \prod_{i=1}^n D\one_{(W_i=d)} + A\one_{(W_i=a)} + E\one_{(W_i=e)}|v \rangle
\]
where $Z_{n,r}$ is the coefficient of $y^r$ in $\frac{\langle w| (D + yA + E)^n|v \rangle}{\langle w | A^r|v \rangle}$.
\end{thm}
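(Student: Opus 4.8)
The plan is to show that the matrix-product functional
\[
f(W)\;:=\;\langle w|\prod_{i=1}^n\bigl(D\one_{(W_i=d)}+A\one_{(W_i=a)}+E\one_{(W_i=e)}\bigr)|v\rangle
\]
is, up to normalization, the stationary weight of the chain, and then to pin down $Z_{n,r}$. Every transition carries the common factor $\tfrac{1}{n+1}$, so the stationary distribution of this discrete-time chain coincides with that of the continuous-time chain with the given rates; and on each sector of states with a fixed number $r$ of light particles the chain is irreducible, so the corresponding global balance equations have a one-dimensional solution space. Hence it suffices to check that $f$ satisfies them: for every length-$n$ state $W$,
\[
\sum_{W'}\bigl(f(W')\,r(W'\!\to W)-f(W)\,r(W\!\to W')\bigr)=0 .
\]
Throughout I write $C_d=D$, $C_a=A$, $C_e=E$, and $M_{[j,k]}=C_{W_j}\cdots C_{W_k}$ (the empty product when $j>k$).

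The engine is a ``discrete divergence'' argument in the style of Derrida--Evans--Hakim--Pasquier. I split the balance expression for $W$ into the contribution of each bulk bond $(i,i+1)$, $1\le i\le n-1$ (the three swaps acting on sites $i$ and $i+1$), plus a left-boundary and a right-boundary contribution. The key step is to exhibit auxiliary matrices $\widehat D,\widehat A,\widehat E$ for which the contribution of bond $(i,i+1)$ equals $\langle w|\,M_{[1,i-1]}\bigl(\widehat{C}_{W_i}C_{W_{i+1}}-C_{W_i}\widehat{C}_{W_{i+1}}\bigr)M_{[i+2,n]}|v\rangle$. The three quadratic relations say exactly that $\widehat D=-I$, $\widehat A=0$, $\widehat E=I$ do this: on a $de$-bond the contribution is $qED-DE=-(D+E)=\widehat D E-D\widehat E$; on a $da$- or $ae$-bond it is $-A=\widehat D A-D\widehat A=\widehat A E-A\widehat E$; the three reversed configurations give the negatives of these and match $\widehat E D-E\widehat D$, $\widehat A D-A\widehat D$, $\widehat E A-E\widehat A$ in the same way; and a same-letter bond gives $0$ because each of $D,A,E$ commutes with its hatted partner. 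For the ends, the left-boundary contribution is $\mp\,\alpha\langle w|E\,M_{[2,n]}|v\rangle$ (sign $-$ if $W_1=e$, $+$ if $W_1=d$), which by $\langle w|E=\tfrac{1}{\alpha}\langle w|$ equals $-\langle w|\widehat{C}_{W_1}M_{[2,n]}|v\rangle$; symmetrically $D|v\rangle=\tfrac{1}{\beta}|v\rangle$ makes the right-boundary contribution equal $+\langle w|M_{[1,n-1]}\widehat{C}_{W_n}|v\rangle$. No further boundary relation is required, since neither boundary creates or destroys a light particle --- consistent with the conservation of $r$.

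With these local identities in hand the balance expression telescopes. Setting $P_i:=\langle w|M_{[1,i-1]}\widehat{C}_{W_i}M_{[i+1,n]}|v\rangle$, the contribution of bond $(i,i+1)$ is $P_i-P_{i+1}$, so the bulk terms sum to $P_1-P_n$, while the left and right boundaries contribute $-P_1$ and $+P_n$; the total is $0$. Thus $f$ solves the balance equations, so on the $r$-sector the stationary law is $f$ up to normalization. Finally, since $\langle w|(D+yA+E)^n|v\rangle=\sum_{r\ge 0}y^r\sum_{W:\,r\text{ light}}f(W)$, extracting the coefficient of $y^r$ (and dividing by $\langle w|A^r|v\rangle$, which fixes the chosen normalization inside the sector) identifies $Z_{n,r}$ with the stated expression.

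The one genuinely non-mechanical step is guessing the auxiliary matrices $\widehat D=-I$, $\widehat A=0$, $\widehat E=I$ --- equivalently, recognizing that the five hypotheses of the theorem (the three quadratic relations and the two eigenvalue relations for $\langle w|$ and $|v\rangle$) are precisely the conditions that let the divergence telescope with scalar-identity companions. Once that is done, verifying the local identities is a finite check over the letter pairs, the telescoping is immediate, and the computation of $Z_{n,r}$ is elementary.
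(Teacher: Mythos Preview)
Your argument is correct and follows essentially the same route as the paper. The paper does not prove Theorem~\ref{ansatz} directly (it is cited from Uchiyama), but it proves the $k$-species generalization, Theorem~\ref{ansatz3}, by exactly the same mechanism: write the global balance equation for $f(W)$, apply the Ansatz relations at each bond and at the two boundaries to reduce every contribution to a signed $f_{n-1}^i(W)$, and then observe that all terms cancel. Your auxiliary matrices $\widehat D=-I$, $\widehat A=0$, $\widehat E=I$ are a clean repackaging of this: your $P_i=\langle w|M_{[1,i-1]}\widehat C_{W_i}M_{[i+1,n]}|v\rangle$ is precisely $-f_{n-1}^i(W)$, $0$, or $+f_{n-1}^i(W)$ according as $W_i=d,a,e$, and your one-line telescoping $\sum_i(P_i-P_{i+1})=P_1-P_n$ is the cancellation that the paper verifies by the case enumeration (a)--(f) on consecutive letter pairs. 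The two arguments are the same proof; yours is the tidier presentation.
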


This result generalizes a previous Matrix Ansatz solution for the regular PASEP of Derrida et. al. in \cite{derrida}. 

In a previous paper, Mandelshtam and Viennot \cite{omxgv} introduced the rhombic alternative tableaux (RAT) which generalize the alternative tableaux and provide an explicit combinatorial formula for the stationary probabilities of the two-species PASEP. We describe the rhombic alternative tableaux below.

\begin{figure}[h]
\begin{minipage}{.48\textwidth}
  \centering
  \includegraphics[width=.52\linewidth]{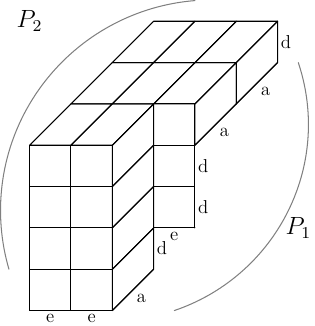}
  \caption{The rhombic diagram\\ $\Gamma(daaddedaee)$.}
  \label{maximal}
\end{minipage}%
\hfill
\begin{minipage}{.48\textwidth}
  \centering
  \includegraphics[width=.4\linewidth]{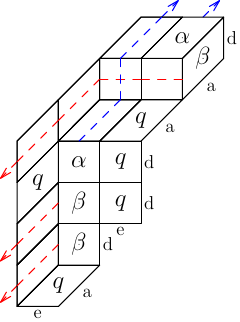}
  \caption{An example of a RAT of size $(9,3,4)$ with type $daaddedae$ and weight $\alpha^6\beta^5q^5$.}
  \label{RAT_example}
\end{minipage}
\end{figure}

\begin{defn}
Let $W$ be a word in the letters $\{d, a, e\}$ with $k$ $d$'s, $\ell$ $e$'s, and $r$ $a$'s of total length $n\vcentcolon= k+\ell+r$. Define $P_1$ to be the path obtained by reading $W$ from left to right and drawing a south edge for a $d$, a west edge for an $e$, and a southwest edge for an $a$. (From here on, we call any south edge a $d$-edge, any east edge an $e$-edge, and any southwest edge an $a$-edge.) Define $P_2$ to be the path obtained by drawing $\ell$ west edges followed by $r$ southwest edges, followed by $k$ south edges. A \textbf{rhombic diagram} $\Gamma(W)$ of \textbf{type} $W$ is a closed shape on the triangular lattice that is identified with the region obtained by joining the northeast and southwest endpoints of the paths $P_1$ and $P_2$ (see Figure \ref{maximal}).
\end{defn}

\begin{figure}[h]
\begin{minipage}{0.4\textwidth}
\centering
\includegraphics[width=0.6\textwidth]{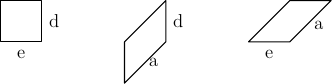}
\caption{The tiles $de$, $da$, and $ae$.}\label{tiles}
\end{minipage}\hfill
\begin{minipage}{0.6\textwidth}
\centering
\includegraphics[width=0.6\textwidth]{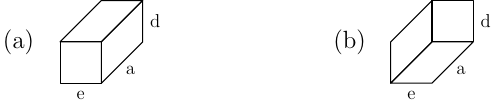}
\caption{(a) maximal and (b) minimal hexagons.}\label{hexagons}
\end{minipage}\hfill
\end{figure}

\begin{defn}
A \textbf{tiling} $\mathcal{T}$ of a rhombic diagram is a collection of open regions of the following three parallelogram shapes as seen in Figure \ref{tiles}, the closure of which covers the diagram:
\begin{itemize}
\item A parallelogram with south and west edges which we call a $de$ tile.
\item A parallelogram with southwest and west edges which we call an $ae$ tile.
\item A parallelogram with south and southwest edges which we call a $da$ tile.
\end{itemize}
We define the \textbf{area} of a tiling to be the total number of tiles it contains.
\end{defn}

\begin{defn}
The \textbf{size} of a RAT of type $W$ is $(n,k,r)$, where $k$ is the number of $d$'s in $W$, $r$ is the number of $a$'s in $W$, and $n$ is the total number of letters in $W$.
\end{defn}

\begin{defn}
A \textbf{$e$-strip} on a rhombic diagram with a tiling is a maximal strip of adjacent tiles of types $de$ or $ae$, where the edge of adjacency is always an $e$-edge. A \textbf{$d$-strip} is a maximal strip of adjacent tiles of types $de$ or $da$, where the edge of adjacency is always a $d$-edge. An \textbf{$a$-strip} is a maximal strip of adjacent tiles of types $da$ or $ae$, where the edge of adjacency is always an $a$-edge. 
\end{defn}

\begin{defn}
To compute the \textbf{weight} $\wt(F)$ of a filling $F$, first a $q$ is placed in every empty tile that does not have an $\alpha$ below it in the same $e$-strip or a $\beta$ to its right in the same $d$-strip. Next, $\wt(F)$ is the product of all the symbols inside $F$ times $\alpha^k \beta^{\ell}$, for $F$ a filling of size $(k+\ell+r,r,k)$.
\end{defn}

An example of a RAT is shown on the left of Figure \ref{RAT_example}.

It is easy to show with a weight-preserving bijection that we define in terms of certain ``flips'' on the tilings, that the sum of the weights of the tilings of $\Gamma(W)$ is independent of the tiling.\footnote{We define the flips more precisely in Section \ref{mc_sec}. The well-known fact that one can obtain any tiling of $\Gamma(W)$ from any other tiling with a series of flips is proved in \cite{omxgv}.}

\begin{prop}[\cite{omxgv} Proposition 2.8]\label{equiv}
Let $W$ be a word in $\{d, a, e\}$. Let $\mathcal{T}_1$ and $\mathcal{T}_2$ represent two different tilings of a rhombic diagram $\Gamma(W)$ with $de$, $da$, and $ae$ tiles. Then 
\[
\sum_{F \in \fil(W,\mathcal{T}_1)} \wt(F)=\sum_{F' \in \fil(W,\mathcal{T}_2)} \wt(F').
\]
\end{prop}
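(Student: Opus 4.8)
The footnote already flags the intended method, so I would organize the proof as a reduction to a single \emph{flip} followed by a local weight-preserving bijection. The plan is as follows.

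\textbf{Step 1: reduce to one flip.} Invoke the fact, established in \cite{omxgv} (and recalled in Section \ref{mc_sec}), that any tiling of $\Gamma(W)$ can be transformed into any other by a finite sequence of flips, where a flip replaces the three tiles filling a maximal hexagon (Figure \ref{hexagons}(a)) by the three tiles filling the corresponding minimal hexagon (Figure \ref{hexagons}(b)), or vice versa. Choose a sequence of tilings $\mathcal{T}_1=\mathcal{S}_0,\mathcal{S}_1,\dots,\mathcal{S}_m=\mathcal{T}_2$ in which consecutive terms differ by one flip. By transitivity it then suffices to prove $\sum_{F\in\fil(W,\mathcal{S}_i)}\wt(F)=\sum_{F\in\fil(W,\mathcal{S}_{i+1})}\wt(F)$ when $\mathcal{S}_{i+1}$ is obtained from $\mathcal{S}_i$ by flipping a single hexagon $H$.

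\textbf{Step 2: a local bijection.} Outside $H$, the tiles of the two tilings are literally the same tiles, and inside $H$ each tiling has exactly one tile of each type $de$, $da$, $ae$. I would construct an explicit bijection $\phi\colon \fil(W,\mathcal{S}_i)\to\fil(W,\mathcal{S}_{i+1})$ that leaves the $\alpha/\beta$-decorations of every tile outside $H$ unchanged and prescribes, case by case, how the decorations of the (at most three) tiles of $H$ are transported across the flip. Two things must be verified for $\phi$: that it sends valid fillings to valid fillings (the filling constraints are strip-dependent, so this is not automatic), and that $\wt(\phi(F))=\wt(F)$. Since the prefactor $\alpha^k\beta^\ell$ is manifestly tiling-independent, the weight identity reduces to checking that the multiset of symbols placed inside the filling — the $\alpha$'s, $\beta$'s, and the $q$'s inserted by the strip rule — is preserved.

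\textbf{Main obstacle.} The real difficulty is that the $q$-placement rule is \emph{non-local}: a blank tile receives a $q$ unless it sees an $\alpha$ below it in its $e$-strip or a $\beta$ to its right in its $d$-strip, and a flip reroutes the $d$-, $e$-, and $a$-strips passing through $H$. Hence one must check not only that the tiles of $H$ together contribute the same number of $q$'s before and after, but also that no tile \emph{outside} $H$ changes its $q$-status. The observation that makes this manageable is that a flip alters the strip structure only in a bounded neighborhood of $H$: each strip that enters $H$ exits along a determined edge, so once the decorations on the boundary tiles of $H$ are fixed, the relevant ``first $\alpha$ below'' and ``first $\beta$ to the right'' relations along every strip are the same for $\mathcal{S}_i$ and $\mathcal{S}_{i+1}$. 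The whole verification therefore collapses to a finite case analysis over the two orientations of $H$ and the possible $\alpha/\beta/$blank patterns on the tiles bounding $H$ — routine but somewhat tedious, and this is precisely the ``flips'' computation alluded to in the footnote.
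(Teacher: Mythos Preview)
Your proposal is correct and matches the approach the paper points to: the proposition is quoted from \cite{omxgv}, where (as recalled in Section~\ref{matrix_sec} and \cite[Definition 2.16]{omxgv}) the proof proceeds exactly by reducing to a single flip via connectivity of tilings and then exhibiting a weight-preserving bijection case-by-case on the possible fillings of the hexagon. Your identification of the non-local $q$-rule as the main thing to check, and your reduction of it to a finite local case analysis, is precisely the content of that computation.
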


From the above, the definition below is well-defined.
\begin{defn}
Let $W$ be a word in $\{d, a, e\}$, and let $\mathcal{T}$ be an arbitrary tiling of $\Gamma(W)$.  Then the \textbf{weight} of a word $W$ is 
\[
\weight(W)=\sum_{F \in \fil(W,\mathcal{T})} \wt(F).
\]
\end{defn}

In \cite{omxgv}, it was shown that the rhombic alternative tableaux satisfy the same recursions as in the Matrix Ansatz, and therefore the weight generating function for the tableaux provides the stationary probabilities of the process. 

\begin{thm}[\cite{omxgv} Theorem 3.1]\label{prob}
Let $W$ be a word in $\{d, a, e\}^n$ that represents a state of the two-species PASEP with exactly $r$ $a$'s. Let 
\[\mathcal{Z}_{n,r} = \sum_{W'} \weight(W')\]
where $W'$ ranges over all words in $\{d, a, e\}^n$ with exactly $r$ $a$'s. Then the stationary probability of state $W$ is
\begin{equation}\label{eq_prob}
\Pr(W) = \frac{1}{\mathcal{Z}_{n,r}} \weight(W).
\end{equation}
\end{thm}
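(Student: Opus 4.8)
The plan is to deduce Theorem~\ref{prob} from Uchiyama's Matrix Ansatz (Theorem~\ref{ansatz}). Concretely, I would exhibit matrices $D$, $E$, $A$ and row/column vectors $\langle w|$, $|v\rangle$ that simultaneously (i) satisfy the five relations of Theorem~\ref{ansatz}, and (ii) have the property that for every word $W = W_1\cdots W_n \in \{d,a,e\}^n$,
\[
\langle w|\prod_{i=1}^n\bigl(D\one_{(W_i=d)} + A\one_{(W_i=a)} + E\one_{(W_i=e)}\bigr)|v\rangle = \weight(W).
\]
Once (i) and (ii) both hold, Theorem~\ref{ansatz} gives $\Prob(W)\propto\weight(W)$; since the probabilities sum to $1$, the proportionality constant must be $\sum_{W'}\weight(W') = \mathcal{Z}_{n,r}$ (the sum over words with exactly $r$ $a$'s), which is exactly~\eqref{eq_prob}. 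One also checks along the way that summing identity~(ii) over such $W'$ reproduces the numerator of Uchiyama's $Z_{n,r}$, so the two normalizations agree.

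For step (ii) I would read a rhombic diagram column by column. Using Proposition~\ref{equiv} I am free to fix a single "transfer-matrix friendly" tiling $\mathcal{T}$ of $\Gamma(W)$ — e.g.\ the one in which every maximal strip is pushed as far as possible in a fixed direction — so that $\weight(W) = \sum_{F\in\fil(W,\mathcal{T})}\wt(F)$ is computed by scanning the letters of $W$ from left to right. The matrix index will encode the data of the partial tiling along the current vertical cut (how many $d$-strips and $e$-strips are currently "open" and whether each is already protected from forcing a $q$ by an $\alpha$ below it or a $\beta$ to its right); multiplication by $D$, $E$, or $A$ appends a $d$-, $e$-, or $a$-edge together with the new tiles it creates, and the corresponding matrix entry records the generating polynomial (in $q$, $\alpha$, $\beta$) of the local filling choices. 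Expanding the matrix product as a sum over index sequences then matches, term by term, the fillings $F$, while $\langle w|$ and $|v\rangle$ contribute the boundary factors $\alpha^{\ell}$ and $\beta^{k}$ and enforce the boundary relations $\langle w|E = \frac{1}{\alpha}\langle w|$ and $D|v\rangle = \frac{1}{\beta}|v\rangle$.

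Step (i) — verifying the bulk relations $DE = D+E+qED$, $DA = A+qAD$, $AE = A+qEA$ — I would carry out either by a direct computation with the explicit (bidiagonal-type) matrix entries, with $A$ appearing as a low-rank or shift-type correction, or, more conceptually, by recognizing each relation as a local "flip" exchanging two adjacent tiles of a rhombic diagram and invoking the weight-preserving flip bijection that underlies Proposition~\ref{equiv}; the summands $D+E$, $A$, $A$ on the right-hand sides account for the two degenerate flips at a corner where a strip opens or closes.

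The main obstacle will be step (i), and inside it the relation $DE = D+E+qED$: one must show that the rule ``place a $q$ in every empty tile not protected by an $\alpha$ below it in its $e$-strip or a $\beta$ to its right in its $d$-strip'' is reproduced exactly by the matrix bookkeeping, i.e.\ that the apparently global notion of a strip being ``protected'' can be propagated as a finite amount of local state across the cut, and that the powers of $q$ land in precisely the right tiles. Ensuring the matrices are genuinely well-defined — that $\langle w|$ and $|v\rangle$ pair with the products only finitely, so everything converges — is a further bit of care, but once the matrices are pinned down the remaining verifications are routine.
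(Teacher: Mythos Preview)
Your proposal is essentially the paper's own approach: define transfer matrices $D$, $A$, $E$ that build a RAT one edge at a time in a fixed tiling, show the resulting matrix product equals $\weight(W)$, and verify the Matrix Ansatz relations combinatorially by case analysis on the content of the corner tile. Two execution details the paper pins down that you leave vague: first, the natural transfer matrices satisfy $DE-qED=\alpha\beta(D+E)$ rather than $DE-qED=D+E$, so the paper states and uses a slightly generalized Ansatz (Theorem~\ref{ansatz2}) with a free constant $\lambda$ in place of appealing to Theorem~\ref{ansatz} directly; second, with the maximal tiling the only local state needed at the cut is the pair (\emph{number of free $d$-strips}, \emph{number of $a$'s}), which is much leaner than tracking protection status per strip as you suggest, and is what makes the explicit entries and the finiteness of all products transparent.
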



\section{Matrix Ansatz proof of Theorem \ref{prob}}\label{matrix_sec}

In this section we give a new proof of Theorem \ref{prob} by explicitly defining matrices $D$, $A$, and $E$ and row vector $\langle v|$ and column vector $|w \rangle$ that satisfy the hypotheses of a slightly more general Matrix Ansatz, and also have a combinatorial interpretation in terms of the RAT.

\subsection{Definition of our matrices}

Our matrices are infinite and indexed by a pair of non-negative integers in both row and column, so $D = [D_{(i,j)(u,v)}]_{i,j,u,v \geq 0}$, $A = [A_{(i,j)(u,v)}]_{i,j,u,v \geq 0}$, and $E = [E_{(i,j)(u,v)}]_{i,j,u,v \geq 0}$. Our vectors are also indexed by a pair of integers, so $\langle v| = [v_{(i,j)}]_{i,j \geq 0}$ and $|w \rangle = [w_{(u,v)}]^T_{u,v \geq 0}$.

We define $v_{(i,j)} = 1$ for $i=0, j=0$, and 0 for all other indices. We define $w_{(u,v)} = 1$ for all indices. 
\[
D_{(i,j)(i+1,j)} = \frac{1}{\beta} \]
and 0 for all other indices.

\[
A_{(i,j)(u,j+1)} = {i \choose u} q^u \beta^{i-u}
\]
for $0 \leq u \leq i$ and 0 for all other indices.

\[
E_{(i,j)(u,j)} = \frac{\beta^{i-u}}{\alpha} \left[ {i \choose u}q^u ( q^j+ \alpha [j]_q ) + \alpha \sum_{w=0}^{u-1} {i-u+w \choose i-u} q^w \right]
\]
for $0 \leq u \leq i$ and 0 for all other indices. (Here $[j]_q = q^{j-1}+\ldots+1$.)


Since $(i,j)$ specify the row of the matrices, and $(u,v)$ specify the columns, multiplication is defined as
\[
(M N)_{(i,j),(k,\ell)} = \sum_{u,v} M_{(i,j),(u,v)} N_{(u,v),(k,\ell)}.
\]
Note that in the case of the matrices $D$, $A$, and $E$, all products are given by finite sums, since the matrix entries are 0 for $u \geq i+1$ or $v \geq j+1$. 

To facilitate our proof, we provide a more flexible Matrix Ansatz that generalizes Theorem \ref{ansatz} with the same argument as in an analogous proof for the ordinary PASEP of Corteel and Williams \cite[Theorem 5.2]{cw2011}. For a word $W \in \{d, a, e\}^n$ with $r$ $a$'s, we define unnormalized weights $f(W)$ which satisfy
\[
\Pr(W)=f(W)/Z_{n,r}
\]
where $Z_{n,r}=\sum_{W'}f(W')$ where the sum is over all words $W'$ of length $n$ and with $r$ $a$'s.

\begin{thm}\label{ansatz2}
Let $\lambda$ be a constant. Let $\langle w|$ and $|v\rangle$ be row and column vectors with $\langle w| |v\rangle=1$. Let $D$, $E$, and $A$ be matrices such that for any words $X$ and $Y$ in $\{d, a, e\}$, the following conditions are satisfied:
\begin{enumerate}[(I)]
\item[I.] $\langle w| X(DE-qED)Y|v \rangle=\lambda \langle w| X(D+E)Y|v\rangle$,
\item[II.] $\langle w| X(DA-qAD)Y|v \rangle=\lambda \langle w| XAY|v\rangle$,
\item[III.] $\langle w| X(AE-qEA)Y|v \rangle=\lambda \langle w| XAY|v\rangle$,
\item[IV.] $\beta \langle w| XD |v \rangle=\lambda \langle w| X|v\rangle$,
\item[V.] $\alpha \langle w| EY |v \rangle=\lambda \langle w| Y|v\rangle$.
\end{enumerate} 
Let $W = W_1\ldots W_n$ with $W_i \in \{d, a, e\}$ for $1 \leq i \leq n$ represent a state of the two-species PASEP of length $n$ with $r$ ``light'' particles. Then
\[
f(W)= \frac{1}{\langle w| A^r |v \rangle} \langle w| \prod_{i=1}^n D\one_{(W_i=d)} + A\one_{(W_i=a)} + E\one_{(W_i=e)}|v \rangle.
\] 
\end{thm}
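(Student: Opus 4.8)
The plan is to verify directly that the vector $\bigl([W]\bigr)_W$, where $[W]:=\langle w|\prod_{i=1}^n\bigl(D\one_{(W_i=d)}+A\one_{(W_i=a)}+E\one_{(W_i=e)}\bigr)|v\rangle$, is (up to a global scalar) a left null vector of the generator of the two-species PASEP, i.e.\ satisfies the stationarity (master) equations; this is the standard DEHP-style telescoping argument, run exactly as in the single-species case of \cite[Theorem 5.2]{cw2011}. Because every transition of the two-species PASEP leaves the number of $a$'s unchanged, the scalar $\langle w|A^r|v\rangle$ is constant over all states $W$ with $r$ light particles, so it is enough to prove stationarity for the unnormalized weights $[W]$: dividing numerator and denominator by $\langle w|A^r|v\rangle$ then recovers the stated formula for $f(W)$, and the normalization by $Z_{n,r}$ is automatic once one notes that the Markov chain on words of length $n$ with exactly $r$ $a$'s is irreducible (for $\alpha,\beta>0$), so its stationary distribution is unique.

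Write $\widehat d=D$, $\widehat a=A$, $\widehat e=E$. For a state $W=W_1\cdots W_n$, the master equation $\sum_{W'}\mathrm{rate}(W'\!\to\! W)[W']=\bigl(\sum_{W'}\mathrm{rate}(W\!\to\! W')\bigr)[W]$ decomposes as $\sum_{j=1}^{n-1}\delta_j(W)+\delta_L(W)+\delta_R(W)=0$, where $\delta_j(W)$ collects the net flux in and out of $W$ across the bond $(j,j+1)$, and $\delta_L(W)$, $\delta_R(W)$ do the same for the left and right boundaries (the global factor $\tfrac1{n+1}$ on all rates is irrelevant). Inspecting the six bulk swap rules, $\delta_j(W)$ is a difference of two length-$n$ matrix products $\langle w|X\,\widehat{W_j}\widehat{W_{j+1}}\,Y|v\rangle$ with $X=W_1\cdots W_{j-1}$ and $Y=W_{j+2}\cdots W_n$ genuine words (and $\delta_j(W)=0$ for the inert pairs $dd,aa,ee$); likewise $\delta_L(W)$ and $\delta_R(W)$ are expressed through $\langle w|\widehat{W_1}\,\widehat{W_2}\cdots|v\rangle$ and $\langle w|\cdots\widehat{W_{n-1}}\,\widehat{W_n}|v\rangle$, using the boundary rules $eX\to dX$ and $Xd\to Xe$.

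The key step is to introduce the auxiliary ``increment'' matrices $D'=-\lambda\,\mathbbm{1}$, $E'=\lambda\,\mathbbm{1}$, $A'=0$ (setting $\widehat d{}'=D'$, $\widehat a{}'=A'$, $\widehat e{}'=E'$), and to put $T_j(W):=\langle w|\widehat{W_1}\cdots\widehat{W_{j-1}}\,\widehat{W_j}{}'\,\widehat{W_{j+1}}\cdots\widehat{W_n}|v\rangle$. One checks that $\delta_j(W)=T_j(W)-T_{j+1}(W)$ for every $j$: since $T_j-T_{j+1}=\langle w|X\bigl(\widehat{W_j}{}'\widehat{W_{j+1}}-\widehat{W_j}\widehat{W_{j+1}}{}'\bigr)Y|v\rangle$, comparing with $\delta_j(W)$ according to the value of $(W_j,W_{j+1})$ turns each identity into exactly one hypothesis — condition I for $de$ and $ed$, condition II for $da$ and $ad$, condition III for $ae$ and $ea$, and the trivial $0=0$ for $dd,aa,ee$. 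Similarly, conditions IV and V give, in each of the three cases $W_1\in\{d,a,e\}$, that $T_1(W)=-\delta_L(W)$, and in each of the three cases $W_n\in\{d,a,e\}$, that $T_n(W)=\delta_R(W)$. Granting these, $\sum_{j=1}^{n-1}\delta_j(W)$ telescopes to $T_1(W)-T_n(W)$, whence $\sum_j\delta_j(W)+\delta_L(W)+\delta_R(W)=\bigl(T_1(W)+\delta_L(W)\bigr)-\bigl(T_n(W)-\delta_R(W)\bigr)=0$, which is the master equation; normalizing and dividing through by $\langle w|A^r|v\rangle$ finishes the proof.

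There is no deep obstacle here: the one place a little ingenuity is needed is guessing the increment matrices $D'=-\lambda\,\mathbbm{1}$, $E'=\lambda\,\mathbbm{1}$, $A'=0$, after which hypotheses I--V are precisely — no more and no less — what is required to make the telescoping close (this is why the ``weak'', bracketed form of the relations already suffices, exactly as in \cite{cw2011}). The genuinely delicate part is purely bookkeeping: getting the direction of each of the six swap transitions and of the two boundary transitions right, hence the correct signs of $\delta_j$, $\delta_L$, $\delta_R$, and double-checking that every appeal to I--V is indeed of the sandwiched form $\langle w|X(\,\cdot\,)Y|v\rangle$ with $X,Y$ honest words in $D,A,E$ — which it is, because in $T_j-T_{j+1}$ the primed matrix always sits between two genuine word-products. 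The two new swap types $da$ and $ae$, absent in the single-species model, are handled by the two extra relations II and III, respectively, and cost nothing beyond two more cases in the verification.
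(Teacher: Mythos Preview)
Your proof is correct and is exactly the approach the paper invokes: the paper's own proof consists of the single sentence ``follows exactly that of \cite[Theorem~5.2]{cw2011}'', and what you have written is precisely that Corteel--Williams telescoping argument, adapted to the two-species setting by adding the increment $A'=0$ and the two extra cases handled by relations II and III. (It is worth noting that the paper later gives a detailed proof of the $k$-species analogue, Theorem~\ref{ansatz3}, via an explicit case-by-case cancellation rather than your cleaner $T_j$ telescope, but for Theorem~\ref{ansatz2} itself your write-up matches the cited reference.)
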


\begin{proof}
The proof of Theorem \ref{ansatz2} follows exactly that of \cite[Theorem 5.2]{cw2011}. Note that the above implies that 
\[
Z_{n,r}= [y^r]\frac{\langle w| (D+yA+E)^n | v \rangle}{\langle w| A^r |v \rangle}.
\]
\end{proof}

\subsection{Combinatorial interpretation of the matrices in terms of tableaux}\label{combinatorial_sec}

Let $W$ be an arbitrary word in $\{d,a,e\}$ with rhombic diagram $\Gamma(W)$ with the maximal tiling $\mathcal{T}_{max}$, and let $\weight(W)$ be the weight generating function for $\fil(W,\mathcal{T}_{max})$. Define a \textbf{free $d$-strip} to be a $d$-strip that does not contain a $\beta$. 
We call a $de$ or $da$ tile \textbf{free} if the $d$-strip adjacent to its east $d$-edge is a free $d$-strip. Note that any $de$ or $da$ tile that is not free must be empty.

\begin{defn} Let $W$ be a word in $\{d, a, e\}$. We define the \textbf{maximal tiling} of $\Gamma(W)$ to be the tiling that that does not contain an instance of a minimal hexagon (of Figure \ref{hexagons} (b)), for instance the tiling of the rhombic diagram in Figure \ref{maximal}. We refer to such a tiling by $\mathcal{T}_{max}$\footnote{$\mathcal{T}_{max}$ is the unique maximal tiling from \cite{omxgv}.}. $\mathcal{T}_{max}$ can be constructed by placing tiles from $P_1(W)$ inwards, and placing a $de$ tile with first priority whenever possible. In other words, all the north strips of $\mathcal{T}_{max}$ are, from bottom to top, a strip of adjacent $de$ boxes followed by a strip of adjacent $ae$ boxes. (A \textbf{minimal tiling} of $\Gamma(W)$ is correspondingly defined as the (unique) tiling that does not contain an instance of a maximal hexagon.)
\end{defn}

We will show that the previously defined matrices $D$, $A$, and $E$ represent the addition of a $d$-edge, an $a$-edge, and an $e$-edge to the bottom of $\Gamma(W)$ to form the rhombic diagram $\Gamma(Wd)$, $\Gamma(Wa)$, and $\Gamma(We)$ respectively. Recall that these matrices have rows indexed by the pair $(i,j)$ and columns indexed by the pair $(u, v)$. We let $i$ represent the number of free $d$-strips in a tableau $F \in \fil(W,\mathcal{T}_{max}(W))$, and $j$ the number of $a$'s in $W$. For the columns, we let $u$ represent the number of free $d$-strips in a tableau $F' \in \fil(Wd,\mathcal{T}_{max}(Wd))$ (and respectively, $\Gamma(Wa)$ and $\Gamma(We)$), and $v$ the number of $a$'s in $Wd$ (and respectively, $Wa$ and $We$). 

\begin{defn}
Let $X(W)$ be a word in $\{D, A, E\}$ representing a product involving the matrices $D$, $A$, and $E$, corresponding to the 2-PASEP word $W$ in the letters $\{d, a, e\}$. 
\end{defn}

For example, if $W=deaae$, then $X(W) = DEAAE$.

\begin{thm}\label{DAE_combinatorial}
Let $W$ be a word in $\{d, a, e\}$, and let $X=X(W)$. Then:
\begin{itemize}
\item $X_{(i,j)(u,v)}$ is the generating function for all ways of adding $|W|$ new edges of type $W$ to the southwest boundary of a rhombic alternative tableau with $i$ free $d$-strips and $j$ $a$-strips, to obtain a new rhombic alternative tableau with $u$ free $d$-strips and $v$ $a$-strips.
\item $(\langle w| X)_{(u,v)}$ is the generating function for rhombic alternative tableaux of type $W$, which have $u$ free $d$-strips and $v$ $a$-strips.
\item $\langle w| X | v \rangle$ is the generating function for all rhombic alternative tableaux of type $W$.
\end{itemize}
\end{thm}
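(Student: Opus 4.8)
The plan is to prove all three bullet points simultaneously by induction on the length of the word $W$, since the second and third statements follow from the first by specialization: if we know that $X(W)_{(i,j)(u,v)}$ enumerates the ways to append $W$-type edges to a tableau with $(i,j)$ statistics, then because $\langle v| = e_{(0,0)}$ picks out the row corresponding to the empty tableau (which has $0$ free $d$-strips and $0$ $a$-strips), the vector $(\langle w| X)_{(u,v)}$ — wait, I should be careful: the statement uses $\langle w|$ on the \emph{left}, so actually I would set up the induction so that $\langle w| X(W)$ is built up by \emph{left} multiplication. Concretely, I would induct on $|W|$: the base case $|W|=0$ is the claim that $\langle w|$ (or the relevant normalization) counts the empty tableau, which is immediate from the definitions $w_{(u,v)}=1$ and the convention that the empty rhombic diagram has a unique (empty) filling. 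For the inductive step, writing $W = W' c$ with $c \in \{d,a,e\}$, I would show that right-multiplication of $X(W')$ by the matrix $M_c \in \{D, A, E\}$ exactly implements ``glue one new edge of type $c$ to the southwest boundary and re-tile/re-fill.''

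The heart of the argument is therefore a local combinatorial lemma: for each of the three matrices $M \in \{D, A, E\}$, the entry $M_{(i,j)(u,v)}$ equals the generating function (in $\alpha, \beta, q$) for the ways of adding a single edge of the corresponding type to the southwest boundary of a rhombic diagram $\Gamma(W')$ whose maximal filling has $i$ free $d$-strips and $j$ $a$-strips, producing a maximal filling of $\Gamma(W'c)$ with $u$ free $d$-strips and $v$ $a$-strips. I would handle the three cases separately. The $D$ case should be the cleanest: adding a $d$-edge (a south edge) at the bottom creates one new $d$-strip, and this strip is automatically free (no $\beta$ has been placed yet), and it forces a new tile against the previous southwest boundary's bottom edge in a deterministic way, so the count is $1$ — but we have to account for the $\frac{1}{\beta}$, which should come from the fact that the new tile's column, in the maximal tiling, sits at the bottom and the weight bookkeeping attributes a $\frac1\beta$ factor via $\langle v|$-normalization or via the definition of $f$ versus $\weight$; I'd need to trace this carefully, matching it to the $D_{(i,j)(i+1,j)} = \frac1\beta$ entry. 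The $A$ case: adding an $a$-edge increments the number of $a$-strips ($j \to j+1$), and the new $a$-edge creates a new strip of tiles along which, in each of the $i$ existing free $d$-strips, we independently choose whether that $d$-strip ``passes through'' contributing a $q$ or ``terminates'' contributing a $\beta$ — this is exactly the binomial $\binom{i}{u} q^u \beta^{i-u}$, with $u$ the number of free $d$-strips that survive. The $E$ case is the most intricate and I expect it to be the main obstacle: adding an $e$-edge (east edge) keeps $j$ fixed, and the formula $E_{(i,j)(u,j)} = \frac{\beta^{i-u}}{\alpha}\bigl[\binom{i}{u}q^u(q^j + \alpha [j]_q) + \alpha \sum_{w=0}^{u-1}\binom{i-u+w}{i-u}q^w\bigr]$ has three distinct terms that must correspond to three distinct combinatorial scenarios — the new $e$-strip either ends with an $\alpha$ or not, interacts with the $j$ existing $a$-strips in a way that produces the $q^j + \alpha[j]_q$ factor, and the $\sum_w \binom{i-u+w}{i-u}q^w$ term presumably counts configurations where the new $e$-strip ``wraps around'' several terminated $d$-strips with varying $q$-weight.

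To make the $E$ case tractable I would first reduce to the maximal tiling $\mathcal{T}_{max}$ throughout (legitimate by Proposition \ref{equiv} / the tiling-independence of $\weight$), and recall that in $\mathcal{T}_{max}$ every north strip is a block of $de$ tiles below a block of $ae$ tiles; this rigid structure means that ``adding an $e$-edge at the bottom'' has a completely explicit effect on the strip decomposition, so I can literally list the cases. I would organize the case analysis by: (1) which of the $i$ free $d$-strips terminate against the new $e$-edge versus continue (giving $\beta^{i-u}$ and the binomial-type factors), (2) whether the new $e$-edge's tile gets an $\alpha$ or a $q$ under the weight rule ``$q$ in every empty tile with no $\alpha$ below it in its $e$-strip and no $\beta$ to its right in its $d$-strip,'' and (3) the bookkeeping of the $j$ existing $a$-strips. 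Each sub-case should contribute exactly one monomial summand matching a piece of the $E$ formula; checking that the bijection is weight-preserving and that the three groups of sub-cases reassemble into the bracketed expression is the routine-but-delicate computation I would defer. Once the three local lemmas are established, the inductive step is just associativity of matrix multiplication — $X(W'c) = X(W') M_c$ — combined with the observation that the statistics $(u,v)$ of the new tableau are precisely the column index, so composing the ``add $W'$'' generating function with the ``add $c$'' generating function gives the ``add $W'c$'' generating function, completing the induction and hence all three bullet points.
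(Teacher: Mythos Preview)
Your approach is essentially the paper's: it isolates the single-edge case as a transfer-matrix lemma (the paper's Lemma~\ref{transfer_matrices}), proves it by exactly the case analysis you outline---in particular the $E$ case splits into your three scenarios (no $\alpha$ in the new $e$-strip; $\alpha$ in one of the $j$ $ae$ tiles; $\alpha$ in one of the free $de$ tiles)---and then deduces the theorem by matrix multiplication together with the definitions of the boundary vectors. Your hesitation about the $1/\beta$ in $D$ and about which of $\langle w|$, $|v\rangle$ is the indicator of $(0,0)$ is a normalization/labeling issue rather than a gap in the argument; once you fix the convention the $D$ case is indeed the trivial one, as you anticipated.
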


\begin{figure}
\centering
\includegraphics[width=0.7\textwidth]{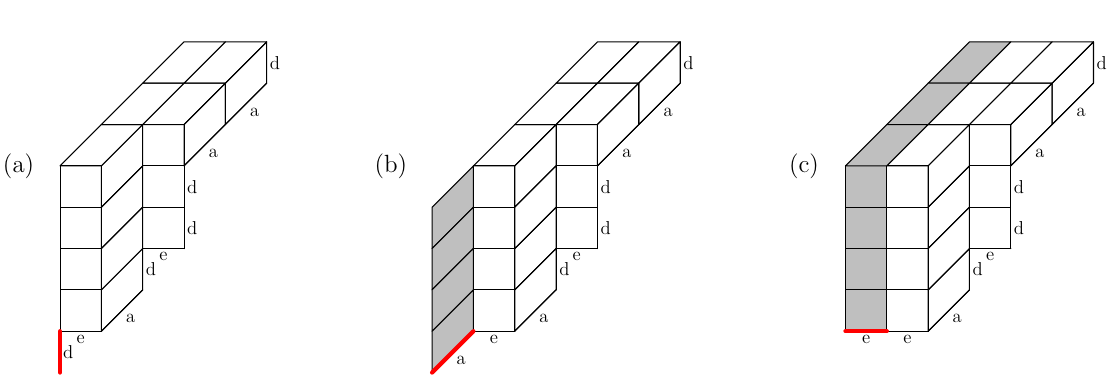}
\caption{Adding a (a) $d$, (b) $a$, or (c) $e$ to the end of $W$.}
\noindent
\label{Zn}
\end{figure}

We prove Theorem \ref{DAE_combinatorial} with the following lemma, which says that the matrices $D$, $A$, and $E$ are ``transfer matrices'' for building rhombic alternative tableaux with the maximal tiling.

\begin{lem}\label{transfer_matrices}
For the matrices $D$, $A$, and $E$,
\begin{itemize}
\item $D_{(i,j)(u,v)}$ is the generating function that represents the addition of a $d$-edge,
\item $A_{(i,j)(u,v)}$ is the generating function that represents the addition of an $a$-edge, and 
\item $E_{(i,j)(u,v)}$ is the generating function that represents the addition of an $e$-edge
\end{itemize}
to the southwest corner of a rhombic alternative tableau with the maximal tiling with $i$ free $d$-strips and $j$ $a$-strips, resulting in a rhombic alternative tableau with the maximal tiling with $u$ free $d$-strips and $j$ $a$-strips. 
\end{lem}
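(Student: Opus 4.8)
The plan is to establish Lemma~\ref{transfer_matrices} by a direct, finite case analysis on the appended edge $x\in\{d,a,e\}$, computing in each case how the maximal tiling, the set of free $d$-strips, and the weight change; Theorem~\ref{DAE_combinatorial} then follows by applying the lemma successively along the letters of $W$ (and invoking Proposition~\ref{equiv} to work throughout with the maximal tiling). The first and most important step is to describe precisely which new tiles appear when one passes from $\mathcal T_{max}(\Gamma(W))$ to $\mathcal T_{max}(\Gamma(Wx))$. Using the ``$de$-first-priority'' characterization of $\mathcal T_{max}$, I would show that appending $x$ inserts exactly one new $x$-strip emanating from the new southwest boundary edge: the new $d$-strip is elementary (it meets no other strip and admits a unique filling); the new $a$-strip meets each $d$-strip once, contributing one new $da$-tile per $d$-strip; and the new $e$-strip meets each $d$-strip and each $a$-strip once, contributing one new $de$-tile per $d$-strip and one new $ae$-tile per $a$-strip. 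The point that makes the transfer-matrix statement meaningful is that the set of ways to extend a filling across this new strip, and the resulting change in weight, depend on $\Gamma(W)$ only through the number $i$ of free $d$-strips and the number $j$ of $a$-strips: a new $da$- or $de$-tile sitting on a non-free $d$-strip already has a $\beta$ to its right within that strip, so it is forced to be empty and carries no $q$, whereas a new tile on a free $d$-strip is genuinely free.

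The second step is then a short enumeration matching each case to the stated matrix entry. For $D$ this is immediate: the new $d$-strip has a unique filling, is automatically free, leaves $j$ unchanged, and the boundary bookkeeping gives $D_{(i,j)(i+1,j)}=1/\beta$. For $A$, appending an $a$-edge creates the new $a$-strip, raises the $a$-strip count to $j+1$, and lets any $i-u$ of the $i$ free $d$-strips be ``closed'' by placing a $\beta$ in their new $da$-tile (the remaining $u$ new $da$-tiles are forced to hold $q$'s), so summing over the choice of which strips close yields $A_{(i,j)(u,j+1)}=\binom{i}{u}q^u\beta^{i-u}$. The real work is the case of $E$, where one must account at once for: (a) the choice of which $i-u$ free $d$-strips receive a $\beta$ in their new $de$-tile, with the remaining $u$ such tiles holding forced $q$'s, giving $\binom{i}{u}q^u\beta^{i-u}$; (b) the single optional $\alpha$ of the new $e$-strip, which may be placed among the new $ae$-tiles — these placements, together with the $q$'s forced in the $ae$-tiles it does not shield, assemble into $q^j+\alpha[j]_q$ — or among the new $de$-tiles over the $u$ surviving free $d$-strips, where it may be interleaved with the already-placed $\beta$'s in any way, and a stars-and-bars count of these interleavings (together with the $q$'s in unshielded $de$-tiles) produces $\alpha\sum_{w=0}^{u-1}\binom{i-u+w}{i-u}q^w$; and (c) the overall normalization $\beta^{i-u}/\alpha$ coming from the boundary factor and the per-edge conventions.

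I expect the main obstacle to be exactly this enumeration for $E$: one must set up the correct bijection between fillings of the new $e$-strip and the lattice paths (equivalently, multisets) counted by $\sum_{w}\binom{i-u+w}{i-u}q^w$, and one must argue carefully about the precise rule governing when a forced $q$ appears in a new $de$- or $ae$-tile — namely whether a $\beta$ lies to its right or an $\alpha$ below it in the same strip — since this rule is exactly what separates the three summands in the formula for $E_{(i,j)(u,j)}$ and what forces the ranges $u\le i$ and $w\le u-1$. Once the strip description of the first step and this $q$-shielding analysis are in place, the cases $D$ and $A$, and the passage from Lemma~\ref{transfer_matrices} to Theorem~\ref{DAE_combinatorial}, are routine.
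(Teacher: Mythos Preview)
Your proposal is correct and follows essentially the same approach as the paper: both argue by describing the single new strip appended to $\mathcal T_{max}$ for each of $d$, $a$, $e$, and then enumerate the possible fillings of that strip, with the $E$ case split according to whether the optional $\alpha$ is absent, lies in an $ae$-tile, or lies in a free $de$-tile. Two minor remarks: the appeal to Proposition~\ref{equiv} is unnecessary since the lemma and Theorem~\ref{DAE_combinatorial} are stated for the maximal tiling throughout, and your phrasing of the $de$-tile case as ``interleaving the $\alpha$ with the already-placed $\beta$'s'' is slightly loose (the real constraint is that all $\beta$'s lie below the $\alpha$, which is exactly what produces $\sum_{w=0}^{u-1}\binom{i-u+w}{i-u}q^w$), but your stars-and-bars description and the formula you arrive at are correct.
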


\begin{proof} We describe the possible rhombic alternative tableaux that arise from the addition of a $d$-edge, an $a$-edge, and an $e$-edge respectively to the southwest corner of an existing RAT of shape $W$ with the maximal tiling, and $i$ free $d$-strips and $j$ $a$-strips.

\noindent \textbf{The addition of the $d$-edge} to $\Gamma(W)$ does not affect the interior of the tableau, as in the example of Figure \ref{RAT_example} (a), and the tiling of the new tableau is clearly still a maximal one. Thus for any $F \in \fil(W, \mathcal{T}_{max}(W))$, we obtain $F' \in \fil(Wd, \mathcal{T}_{max}(Wd))$ whose weight simply increases by $\alpha$, the weight of the new $d$-edge. We have thus $\wt(F') = \alpha\wt(F)$. Moreover, the addition of the $d$-edge adds exactly one free $d$-strip to $F$. Thus we obtain the desired entry in the matrix $D$.

\noindent \textbf{The addition of the $a$-edge} and a vertical strip of adjacent $da$ tiles to the left boundary of $\mathcal{T}_{max}(W)$ results in a maximal tiling of $\Gamma(Wa)$, as in the example of Figure \ref{RAT_example} (b). Let us consider the entry $(i,j),(u,j+1)$ of $A$ for $0 \leq u \leq i$. Each free $da$ tile contains either a $q$ or a $\beta$ with no restrictions on their positions, for a total of $i-u$ $\beta$'s and $u$ $q$'s. Thus there are precisely ${i \choose u}$ ways to choose such a filling of the new tiles. Every such filling contributes a weight of $q^u \beta^{i-u}$. $Wa$ now has $j+1$ $a$'s, and it is clear that all other entries of $A$ are zero. Thus we obtain the desired entry in the matrix $A$.

\noindent \textbf{The addition of the $e$-edge} and a vertical strip of adjacent $de$ tiles followed by $j$ adjacent $ae$ tiles to the left boundary of $\mathcal{T}_{max}(W)$ results in a maximal tiling of $\Gamma(We)$, as in the example of Figure \ref{RAT_example} (c). Let us call this strip of new tiles the new $e$-strip. There are three possible cases for this new $e$-strip. For the following, let us consider the entry $(i,j),(u,j)$ of $E$ for $0 \leq u \leq i$. 

\emph{Case 1:} the new $e$-strip does not contain an $\alpha$. Then each of the $j$ $ae$ tiles must contain a $q$, and each of the $i$ free $de$ tiles contains either a $q$ or a $\beta$, with no restrictions on their positions, with exactly $i-u$ $\beta$'s and $u$ $q$'s. This gives a total weight contribution of ${i \choose u} \beta^{i-u} q^{u+j}$.

\emph{Case 2:} the new $e$-strip contains an $\alpha$ in one of the $ae$ tiles. Then each of the $ae$ tiles below that $\alpha$ must contain a $q$, and each of the free $i$ $de$ tiles contains either a $q$ or a $\beta$, with no restrictions on their positions, with exactly $i-u$ $\beta$'s and $u$ $q$'s. This gives a total weight contribution of $ {i \choose u} \alpha \beta^{i-u} q^{u} [j]_q$.

\emph{Case 3:} the new $e$-strip contains an $\alpha$ in one of the free $de$ tiles. Then exactly $i-u$ of the free $de$ tiles below the $\alpha$ must contain a $\beta$, and $u$ of them contain a $q$. This gives a total weight contribution of $\beta^{i-u} \alpha \sum_{w=0}^{u-1} {i-u+w \choose i-u} q^w$.

Thus we obtain the desired entry in the matrix $E$.
\end{proof}

\begin{proof}[Proof of \ref{DAE_combinatorial}]
The first point is immediate from Lemma \ref{transfer_matrices}.

The second point is due to the following: $\langle w|$ is a row vector for which the entry with index $(0,0)$ is 1, and the rest are 0. By the first point, $(\langle w| X)_{(0,0),(u,v)}$ is, in particular, the generating function for adding $|W|$ new edges of type $W$ to the southwest boundary of a trivial RAT of size 0, to result in a RAT of type $W$ with the maximal tiling with $u$ free $d$-strips and $v$ $a$-strips.

The third point is due to the following: $| v \rangle$ is a column vector with every entry equal to 1. By the second point, the generating function for all possible RAT in $\fil(W,\mathcal{T}_{max}(W))$ is the sum of RAT of type $W$ over all choices for the number of $a$-strips and free $d$-strips in the fillings. In other words, it is the sum over all $(u, v)$ of $(\langle w| X)_{(0,0),(u,v)}$. It follows that $\langle w| X | v \rangle$ is the desired generating function.
\end{proof}

\subsection{Combinatorial proof that our matrices satisfy the Matrix Ansatz}

Using Theorem \ref{DAE_combinatorial}, we provide simple combinatorial proofs that our matrices satisfy the equations of Theorem \ref{ansatz2}. Let $W$ be a word in $\{d, a, e\}$ with $\Gamma(W)$ its rhombic diagram. In this subsection, when we say ``addition of a $d$ (or $a$ or $e$) to $W$'', we mean adding a $d$-edge (or $a$- or $e$-edge) to the southwest point of $\Gamma(W)$, as described in the preceding subsection.

\textbf{I. $DE-qED=\alpha\beta(D+E)$}

By our construction, consecutive addition of a $d$ and an $e$ to $W$ results in a $de$ corner with a $de$ corner tile as the bottom-most tile of the $e$-strip that contains it (as well as the right-most tile of the $d$-strip that contains it). This $de$ corner tile contains an $\alpha$, $\beta$, or $q$. 
\begin{itemize}
\item If the $de$ corner tile contains an $\alpha$, then the rest of the $e$-strip containing this tile must be empty. Thus the entire $e$-strip has weight $\alpha\beta$, and the rest of the tableau has the same weight as if the $de$ were replaced by a $d$ (with the same filling in the corresponding tiles). 
\item If the $de$ corner tile contains a $\beta$, then the rest of the $d$-strip containing this tile must be empty. Thus the entire $d$-strip has weight $\alpha\beta$, and the rest of the tableau has the same weight as if the $de$ were replaced by an $e$ (with the same filling in the corresponding tiles).
\item If the $de$ corner tile contains a $q$, then this tile has no effect on the rest of the tableau which has the same weight as if the $de$ were replaced by an $ed$ (with the same tiling and filling), and the tile itself has weight $q$. 
\end{itemize}
Combining the above cases, we obtain that $DE = qED+ \alpha\beta(D+E)$, as desired.

%

\textbf{II. $DA-qAD=\alpha\beta A$}

By our construction, consecutive addition of a $d$ and an $a$ results in a $da$ corner with a $da$ corner tile as the right-most tile of the $d$-strip that contains it. This $de$ corner tile contains a $\beta$ or $q$. 
\begin{itemize}
\item If the $da$ corner tile contains a $\beta$, then the rest of the $d$-strip containing this tile must be empty. Thus the entire $d$-strip has weight $\alpha\beta$, and the rest of the tableau has the same weight as if the $da$ were replaced by an $a$ (with the same filling in the corresponding tiles).
\item If the $da$ corner tile contains a $q$, then this tile has no effect on the rest of the tableau which has the same weight as if the $da$ were replaced by an $ad$ (with the same tiling and filling), and the tile itself has weight $q$. 
\end{itemize}
Combining the above cases, we obtain that $DA = qAD+ \alpha\beta A$, as desired.

%

\textbf{III. $AE-qEA=\alpha\beta A$}

\begin{defn} We call an \textbf{$ae$ strip} the region of the rhombic diagram that corresponds to a maximal $a$-strip together with an adjacent maximal $e$-strip. (By maximal $a$- and $e$-strips, we mean $a$- and $e$-strips as they would appear in a maximal tiling of a rhombic diagram, i.e. a strip of adjacent $da$ tiles for the $a$-strip as in Figure \ref{Zn} (b), and a vertical strip of adjacent $de$ tiles followed by a strip of adjacent $ae$ tiles for the $e$-strip as in Figure \ref{Zn} (c).) We allow any valid tiling for the $ae$ strip, and we call an $ae$ strip \textbf{maximal} if it has the maximal tiling, and we call it \textbf{minimal} if it has the minimal tiling. Note that a minimal $ae$ strip has an $ae$ corner tile in the $ae$ corner.
\end{defn}

By our construction, consecutive addition of an $a$ and an $e$ results in a maximal $ae$ strip. For our proof, we consider the corresponding minimal $ae$ strip. We apply a series of flips to convert the maximal $ae$ strip to a minimal $ae$ strip, and we consider the contents of its $ae$ corner tile. This $ae$ corner can contain an $\alpha$ or $q$. If the $ae$ corner tile contains an $\alpha$, then the rest of the $e$-strip containing this tile must be empty. Thus the entire $e$-strip has weight $\alpha\beta$, and the rest of the tableau has the same weight as if the $e$-strip were removed entirely. This operation is the same as if in the original tableau, the $ae$ were replaced by an $a$ (with the same filling in the corresponding tiles). 

For the other case, if the $ae$ corner tile contains a $q$, then this tile has no effect on the rest of the tableau. Thus the weight of the tableau with the exception of the $ae$ corner tile is the same as the weight of a tableau with the same tiling and filling with the $ae$ replaced by an $ea$. Moreover, this new tableau (with the $ae$ corner tile removed from the minimal $ae$ strip) is in fact the maximal tableau that corresponds to replacing the $ae$ by an $ea$. Thus we have as desired, $AE = qEA + \alpha\beta A$ from these two cases.

\begin{rem}
It is also possible to directly compute the $(i,j),(u,v)$ entry of each term of the equations of Theorem \ref{ansatz2}, and show that equality holds in each case.
\end{rem}

\subsection{Properties of the RAT and enumeration}


\begin{wrapfigure}[7]{r}{0.4\textwidth}
\centering
\includegraphics[width=0.3\textwidth]{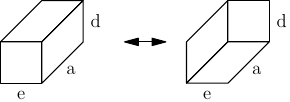}
\caption{A flip from a maximal (left) to a minimal hexagon (right).}
\noindent
\label{flip}
\end{wrapfigure}

\begin{defn} A \textbf{flip} is an involution that switches between a maximal hexagon and a minimal hexagon, and is the particular rotation of tiles that is shown in Figure \ref{flip}.
\end{defn}

It is a well-known result for rhombic tableaux that one can get from any tiling $\mathcal{T}$ to any other tiling $\mathcal{T}'$ with a series of flips. (This is elaborated upon in \cite{omxgv}.) In \cite[Definition 2.16]{omxgv}, we extend the flips on a tiling $\mathcal{T}$ of $\Gamma(W)$ for some word $W$ to weight-preserving flips on the RAT $T \in \fil(W,\mathcal{T})$, by explicitly defining the weight-preserving flip for each possible case of a filling of a hexagon in $\mathcal{T}$. This leads to the following definition. 

\begin{defn}\label{equivalent}
Let $W$ be a state of the two-species PASEP and $\mathcal{T}$ and $\mathcal{T}'$ be some tilings of $\Gamma(W)$. A RAT $F \in \fil(W,\mathcal{T})$ is \textbf{equivalent} to a RAT $F' \in \fil(W,\mathcal{T}')$ if $F$ can be obtained from $F'$ by some series of weight-preserving flips. 
\end{defn}

Let $\Omega^n_r$ be the set of states of the two-species PASEP of size $n$ with exactly $r$ ``light'' particles. Let $\Psi_{(n,r)}$ be the set of equivalence classes of RAT whose type belongs to $\Omega^n_r$. More precisely, $\psi \in \Psi_{(n,r)}$ is some set of RAT of a single type such that for any $F, F' \in \psi$, $F$ and $F'$ are equivalent. Moreover, if $F$ and $F'$ are equivalent and $F \in \psi$ and $F' \in \psi'$, then $\psi=\psi'$.

Finally, from \cite{omxgv}, we also have the following theorem.

\begin{thm}[\cite{omxgv} Theorem 2.19]
\[
\mathcal{Z}_{n,r}(\alpha,\beta,1) = {n \choose r} \prod_{i=r}^{n-1} (\alpha+\beta+i\alpha\beta).
\]
\end{thm}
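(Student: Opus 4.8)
The plan is to compute the generating function $\mathcal{Z}_{n,r}(\alpha,\beta,1)$ directly from the Matrix Ansatz of Theorem \ref{ansatz2}, specializing $q=1$. By the remark following Theorem \ref{ansatz2} we have
\[
\mathcal{Z}_{n,r}(\alpha,\beta,1) = \langle w| A^r |v\rangle \cdot Z_{n,r} = [y^r]\,\langle w| (D+yA+E)^n |v\rangle,
\]
so it suffices to extract the coefficient of $y^r$ in $\langle w|(D+yA+E)^n|v\rangle$ at $q=1$. I would first set $q=1$ in the explicit matrix entries: $D_{(i,j)(i+1,j)}=1/\beta$, $A_{(i,j)(u,j+1)}=\binom{i}{u}\beta^{i-u}$, and $E_{(i,j)(u,j)}=\frac{\beta^{i-u}}{\alpha}\bigl[\binom{i}{u}(1+\alpha j)+\alpha\sum_{w=0}^{u-1}\binom{i-u+w}{i-u}\bigr]$. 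Since the $j$-index only ever increases (by $1$ each time an $A$ is applied) and starts at $0$, the coefficient of $y^r$ picks out exactly the words with $r$ occurrences of $A$, and along such a word the $j$-coordinate runs through $0,1,\dots,r$ in blocks determined by the positions of the $A$'s.

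The key step is to recognize that, with $q=1$, the first coordinate $i$ (the number of free $d$-strips) behaves like a simple birth process: $D$ raises $i$ to $i+1$ with weight $1/\beta$, while $A$ and $E$ lower $i$ to any $u\le i$ with binomial-type weights. I would introduce the single-variable generating operators obtained by summing over the second index, i.e. pass to matrices $\widetilde D,\widetilde A,\widetilde E$ acting on the $i$-coordinate alone (legitimate because of the block structure in $j$), and then diagonalize or find the eigenstructure of $\widetilde D+\widetilde E$ and of $\widetilde A$. A natural approach is a change of basis $e_i \mapsto$ (something like $(1+\beta x)^i$ or a shifted-factorial basis) in which $\langle w|$ and $|v\rangle$ become simple generating vectors and the action of each matrix becomes a multiplication/substitution operator; then $\langle w|(\widetilde D+\widetilde E)^{m}|v\rangle$ and the insertions of $\widetilde A$ telescope. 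From the combinatorial side (Theorem \ref{DAE_combinatorial}), $\mathcal{Z}_{n,r}$ counts equivalence classes of RAT with type in $\Omega^n_r$ weighted by $\alpha^{\#e}\beta^{\#d}q^{\#q}$ at $q=1$, and the factor $\binom{n}{r}$ should emerge from choosing the positions of the $r$ southwest ($a$) edges among the $n$ edges of the boundary path, while the product $\prod_{i=r}^{n-1}(\alpha+\beta+i\alpha\beta)$ should come from building the remaining structure one edge at a time, each new $d$- or $e$-edge contributing a transfer factor of the form (weight of new $d$-strip) $+$ (weight of new $e$-strip) $+$ (interaction with $i$ existing free $d$-strips) $= \alpha+\beta+i\alpha\beta$.

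Concretely, I would proceed by induction on $n$. The cleanest route: prove a one-step recursion $\mathcal{Z}_{n,r}(\alpha,\beta,1) = \mathcal{Z}_{n-1,r-1}(\alpha,\beta,1) + (\text{factor}) \cdot \mathcal{Z}_{n-1,r}(\alpha,\beta,1)$ coming from whether the last edge added is an $a$-edge or a $d$/$e$-edge, using Lemma \ref{transfer_matrices} to evaluate the contribution of the last edge; then check the claimed formula satisfies the same recursion and base case $\mathcal{Z}_{r,r}=1$ (a path of $r$ southwest edges, a single RAT of weight $1$ at $q=1$). The arithmetic identity to verify is that $\binom{n}{r}\prod_{i=r}^{n-1}(\alpha+\beta+i\alpha\beta)$ equals $\binom{n-1}{r-1}\prod_{i=r-1}^{n-2}(\cdots) + c_n\binom{n-1}{r}\prod_{i=r}^{n-2}(\cdots)$ for the right constant $c_n$, which is a routine binomial-coefficient manipulation once the transfer factor is identified; the only subtlety is correctly summing over the number of free $d$-strips, i.e. showing that summing the entries of the relevant matrix power over the first index collapses to the single factor $(\alpha+\beta+i\alpha\beta)$ with $i$ running over $r,\dots,n-1$.

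The main obstacle I anticipate is bookkeeping the first coordinate: a single step of adding a $d$ or $e$ can change the number of free $d$-strips in several ways (a $\beta$ kills a free strip, etc.), so establishing the clean transfer factor $\alpha+\beta+i\alpha\beta$ requires a careful summation over $u\le i$ of the $D$- and $E$-entries at $q=1$ — essentially the combinatorial identity $\sum_u \binom{i}{u}\beta^{i-u}(\text{stuff})=\alpha+\beta+i\alpha\beta$ after the appropriate normalization. Once that summation identity is nailed down, the rest is induction and elementary algebra.
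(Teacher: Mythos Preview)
The paper does not actually prove this statement: it is quoted from \cite{omxgv} (Theorem 2.19) and used here only to deduce the enumeration corollary. So there is no ``paper's own proof'' to compare against; I can only assess your outline on its own merits.

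Your general strategy---specialize $q=1$ in the transfer-matrix description and induct on $n$ by adding one edge at a time---is reasonable, but the specific recursion you write down,
\[
\mathcal{Z}_{n,r}=\mathcal{Z}_{n-1,r-1}+c_n\,\mathcal{Z}_{n-1,r},
\]
cannot hold with a single constant $c_n$. When you append an $a$-edge to a tableau with $i$ free $d$-strips, the new $da$-column contributes $(q+\beta)^i=(1+\beta)^i$ at $q=1$, and $i$ genuinely varies over the tableaux of a fixed type; the same dependence on $i$ occurs when you append an $e$-edge. Thus the ``factor'' in front of $\mathcal{Z}_{n-1,r}$ (and also the coefficient on $\mathcal{Z}_{n-1,r-1}$, which you have implicitly taken to be $1$) is not a scalar multiple of $\mathcal{Z}_{n-1,\ast}$ but rather a weighted sum refined by the statistic $i$. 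Your closing paragraph half-acknowledges this, but the hoped-for identity ``$\sum_u \binom{i}{u}\beta^{i-u}(\text{stuff})=\alpha+\beta+i\alpha\beta$'' is summing in the wrong variable: that sum runs over the \emph{new} value $u$ of the free-strip count for a fixed old value $i$, whereas what the product formula needs is a factor depending on the \emph{step number}, not on a fluctuating internal state.

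To make the transfer-matrix approach work you would need to carry the full refinement by the number of free $d$-strips (equivalently, work with the vector $\langle w|(D+yA+E)^m$ rather than its scalar pairing with $|v\rangle$) and show that this refined generating function has a closed product form from which the desired expression can be read off. Alternatively, the shape of the answer---$\binom{n}{r}$ times a shifted version $\prod_{i=r}^{n-1}$ of the single-species partition function $\prod_{i=0}^{n-1}(\alpha+\beta+i\alpha\beta)$---strongly suggests a bijective proof: choose the $r$ positions of the $a$-edges and then exhibit a weight-preserving bijection (at $q=1$) between RAT of that type and ordinary alternative tableaux on a staircase whose size parameter is shifted by $r$. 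That is much closer to how such formulas are proved in \cite{omxgv}, and it sidesteps the bookkeeping problem entirely.
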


This implies the following corollary.

\begin{cor}
\[
|\Psi_{(n,r)}| = {n \choose r} \frac{(n+1)!}{(r+1)!}.
\]
\end{cor}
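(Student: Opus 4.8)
The plan is to deduce the corollary from the preceding theorem by specializing the weight generating function at $q=1$ and then comparing with the known closed form for $\mathcal{Z}_{n,r}(\alpha,\beta,1)$. The key observation is that when $q=1$, every tile in a RAT is forced to contain one of $\alpha$, $\beta$, or $q=1$, and the weight of a filling $F$ of type $W$ becomes $\alpha^{(\#\text{of }\alpha\text{'s})}\beta^{(\#\text{of }\beta\text{'s})}$ times a monomial that collapses; more to the point, I would argue that at $q=1$ the number of RAT in a single equivalence class $\psi$ is exactly what the weight generating function counts, so that $\mathcal{Z}_{n,r}(\alpha,\beta,1)$ evaluated further at $\alpha=\beta=1$ counts $|\Psi_{(n,r)}|$. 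Actually the cleanest route: each equivalence class $\psi\in\Psi_{(n,r)}$ contributes a single well-defined weight $\weight(W)$ (independent of tiling by Proposition \ref{equiv}), but that counts fillings, not classes; instead I would observe that setting $q=\alpha=\beta=1$ makes every filling have weight $1$ \emph{and} makes the flips trivial in the sense that all fillings of a given type become... no.

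Let me restructure. The honest approach is: first I would set $\alpha=\beta=1$ in the formula $\mathcal{Z}_{n,r}(\alpha,\beta,1)={n\choose r}\prod_{i=r}^{n-1}(\alpha+\beta+i\alpha\beta)$, obtaining $\mathcal{Z}_{n,r}(1,1,1)={n\choose r}\prod_{i=r}^{n-1}(2+i)={n\choose r}\prod_{j=r+2}^{n+1}j={n\choose r}\frac{(n+1)!}{(r+1)!}$, which matches the claimed cardinality. So the content of the corollary is the identification $|\Psi_{(n,r)}| = \mathcal{Z}_{n,r}(1,1,1)$. To establish this I would show that at $\alpha=\beta=q=1$ each equivalence class of RAT contributes exactly $1$ to $\mathcal{Z}_{n,r}$. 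Since $\weight(W)=\sum_{F\in\fil(W,\mathcal{T})}\wt(F)$ for a \emph{fixed} tiling $\mathcal{T}$, and at $\alpha=\beta=q=1$ we have $\wt(F)=1$ for every $F$, it follows that $\weight(W)$ at these values equals the number of fillings of $\Gamma(W)$ with one fixed tiling. So I would instead need the number of equivalence classes of type $W$ to equal the number of fillings with one fixed tiling — equivalently, each equivalence class should contain exactly one RAT per tiling, i.e.\ the flip-equivalence is a bijection between fillings of $\mathcal{T}$ and fillings of $\mathcal{T}'$. This is exactly what the weight-preserving flips of \cite[Definition 2.16]{omxgv} give: the flips are involutions providing bijections $\fil(W,\mathcal{T})\leftrightarrow\fil(W,\mathcal{T}')$, so each equivalence class meets each $\fil(W,\mathcal{T})$ in exactly one element. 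Hence $|\Psi_{(n,r)}| = \sum_{W\in\Omega^n_r}|\fil(W,\mathcal{T}_{\max}(W))| = \sum_{W\in\Omega^n_r}\weight(W)\big|_{\alpha=\beta=q=1} = \mathcal{Z}_{n,r}(1,1,1)$.

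Carrying this out in order: (1) state that at $\alpha=\beta=q=1$ every $\wt(F)=1$, so $\weight(W)|_{1,1,1}=|\fil(W,\mathcal{T})|$ for any tiling $\mathcal{T}$; (2) invoke the weight-preserving flip bijections of \cite{omxgv} to conclude that each $\psi\in\Psi_{(n,r)}$ contains exactly one filling over each tiling of its type, hence $|\Psi_{(n,r)}|=\sum_{W\in\Omega^n_r}|\fil(W,\mathcal{T}_{\max}(W))|=\mathcal{Z}_{n,r}(1,1,1)$; (3) evaluate $\mathcal{Z}_{n,r}(1,1,1)={n\choose r}\prod_{i=r}^{n-1}(2+i)$ and simplify the product to $\frac{(n+1)!}{(r+1)!}$.

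The main obstacle — really the only subtle point — is step (2): justifying that the flip-equivalence classes partition $\bigsqcup_{\mathcal{T}}\fil(W,\mathcal{T})$ into classes of size exactly $(\#\text{tilings of }\Gamma(W))$, equivalently that the weight-preserving flips induce genuine \emph{bijections} between $\fil(W,\mathcal{T})$ and $\fil(W,\mathcal{T}')$ rather than merely weight-preserving maps. This requires that for a filling whose underlying hexagon is maximal, the weight-preserving flip to the minimal hexagon is uniquely determined and reversible, which is precisely the case-by-case construction in \cite[Definition 2.16]{omxgv}; combined with the well-known connectedness of the flip graph on tilings, this yields the claim. Everything else is a one-line binomial/factorial manipulation.
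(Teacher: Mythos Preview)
Your proposal is correct and matches the paper's approach: the paper treats the corollary as an immediate consequence of the preceding theorem, obtained by specializing $\alpha=\beta=1$ (with $q=1$ already) so that $\mathcal{Z}_{n,r}(1,1,1)={n\choose r}\prod_{i=r}^{n-1}(i+2)={n\choose r}\frac{(n+1)!}{(r+1)!}$ and identifying this with $|\Psi_{(n,r)}|$. Your step~(2), arguing via the weight-preserving flip bijections of \cite{omxgv} that each equivalence class meets each $\fil(W,\mathcal{T})$ in exactly one element, is precisely the content the paper leaves implicit in the phrase ``This implies the following corollary.''
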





\section{$k$-species PASEP}\label{k-species}

We now describe a generalization of the two-species PASEP to a $k$-species PASEP (also called $k$-PASEP) of a similar flavor. In our new model, we consider $k$ particle species of varying heaviness on a one-dimensional lattice of size $n$. We call the heaviest particle a $d$ particle, followed by $a_1> a_2 > \cdots > a_{k-1}$. For easier notation, we also introduce another particle which we call an $e$ particle to represent a hole, and we allow this to be the lightest particle in our set of species. Thus, in our model every location on the lattice contains exactly one particle, out of the set of species $\{d, a_1,\ldots,a_{k-1},e\}$. Moreover, the $d$ particle is allowed to ``enter'' on the left at location 1 by replacing an $e$ particle at that location (with rate $\alpha$), and it is allowed to ``exit'' on the right at location $n$ by being replaced with an $e$ particle at that location (with rate $\beta$). The particles of type $a_i$ are not allowed to enter or exit, so we fix the numbers of particles of those species to be $r_i$ for $i=1,\ldots,k-1$. 

For two particle types $A$ and $B$, we write $A>B$ (respectively, $A<B$ or $A=B$) to mean that $A$ is a heavier particle type than $B$ (respectively, $A$ is lighter than $B$, or they are equal). The dynamics in the bulk are the following: a heavier particle of species $A$ can swap with an adjacent lighter particle of species $B$ with rate 1 if $A$ is to the left of $B$, and with rate $0 \leq q_{AB} \leq 1$ if $A$ is to the right of $B$. This means that heavier particles have a tendency to move to the right of the lattice. Our notation is shown in the table below: 

\begin{center}
\begin{tabular}{c c r |c}
$A$& $B$& &$q_{AB}$\\
\hline
$d$ & $a_i$ &$1 \leq i \leq k-1$& $q_{0i}$ \\
$d$ & $e$ && $q_{0 \infty}$\\
$a_i$ & $a_j$ &$1 \leq j < i < k-1$& $q_{ij}$\\  
$a_i$ & $e$ & $1 \leq i \leq k-1$ & $q_{i\infty}$.\\
\end{tabular}
\end{center}

More precisely, our process is a Markov chain with states represented by words of length $n$ in the letters $\{d, a_1,\ldots,a_{k-1},e\}$. The transitions in the Markov chain are the following, with $X$ and $Y$ arbitrary words.
\[
Xa_i e Y \overset{1}{\underset{q_{i\infty}}{\rightleftharpoons}} Xea_i Y \qquad Xde Y \overset{1}{\underset{q_{0\infty}}{\rightleftharpoons}} Xed Y \qquad Xda_i Y \overset{1}{\underset{q_{0 i}}{\rightleftharpoons}} Xa_i d Y \qquad Xa_i a_j Y \overset{1}{\underset{q_{i j}}{\rightleftharpoons}} Xa_ja_iY
\]
\[
e X \overset{\alpha}{\rightharpoonup} d X \qquad \qquad Xd \overset{\beta}{\rightharpoonup} Xe
\]
for $1 \leq i \leq k-1$ and $1 \leq j<i$.

where by $X \overset{u}{\rightharpoonup} Y$ we mean that the transition from $X$ to $Y$ has probability $\frac{u}{n+1}$, $n$ being the length of $X$ (and also $Y$).

\begin{defn}
For a given $k$-PASEP, we fix $n$ to be the size of the lattice and $r_i$ to be the number of particles of species $a_i$ for $1 \leq i \leq k-1$. We define $\Omega^n_{r_1,\ldots,r_{k-1}}$ to be the set of words of length $n$ in the letters $\{d, a_1, \ldots, a_{k-1}, e\}$ with $r_i$ instances of the letter $a_i$ for each $i$. We also define 
\[
\Omega^n = \bigcup_{r_1,\ldots,r_{k-1}} \Omega^n_{r_1,\ldots,r_{k-1}}.
\]
\end{defn}

\begin{rem}
In Section \ref{ansatz_sec}, we will provide a Matrix Ansatz solution for the model with different parameters $q_i$ for every type of transition. However, so far we only have nice combinatorics when all the $q_i$'s are set to equal a single constant $q$. Furthermore, it is easy to see that if $k=2$, we recover the 2-species PASEP that we described in the previous section, and if $k=1$, we recover the original PASEP. 
\end{rem}


\section{Matrix Ansatz solution for the $k$-PASEP}\label{ansatz_sec}

Building on a Matrix Ansatz solution for the usual PASEP by Derrida at. al. \cite{derrida} and a more general solution for the two-species PASEP by Uchiyama in \cite{uchiyama}, we have the following generalization for the $k$-species process.

\begin{thm}\label{ansatz3}
Let $W = W_1\ldots W_n$ with $W_i \in \{d, a_1,\ldots,a_{k-1}, e\}$ for $1 \leq i \leq n$ represent a state of the $k$-species PASEP in $\Omega^n_{r_1,\ldots,r_{k-1}}$. Suppose there are matrices $D$, $A_1,\ldots,A_{k-1}$, and $E$ and a row vector $\langle w|$ and a column vector $|v \rangle$ (with $\langle w| |v \rangle = 1$) which satisfy the following conditions
\begin{equation}\label{ansatz_bulk}
DE-q_{0\infty}ED = D+E, \quad DA_i-q_{0 i} A_iD = A_i,  \quad A_i E-q_{i \infty}E A_i = A_i, \quad A_i A_j - q_{i j} A_j A_i = 0,
\end{equation}
\begin{equation}\label{ansatz_boundary}
\langle w| E = \frac{1}{\alpha} \langle w|, \qquad D|v \rangle= \frac{1}{\beta} |v \rangle,
\end{equation}
then 
\[
\Prob(W) = \frac{1}{Z_{n,r_1,\ldots,r_{k-1}}} \langle w| \prod_{i=1}^n D \one_{(W_i=d)} + E \one_{(W_i=e)} + \sum_{i=1}^{k-1}A_i \one_{(W_i=a_i)} |v \rangle
\]
where $Z_{n,r_1,\ldots,r_{k-1}}$ is the coefficient of  $y_1^{r_1}\ldots y_{k-1}^{r_{k-1}}$ in 
\[
\frac{\langle w| (D + y_1A_1+\cdots+y_{k-1}A_{k-1} + E)^n|v \rangle}{\langle w | A_{k-1}^{r_{k-1}}\cdots A_1^{r_1}|v \rangle}.
\]
\end{thm}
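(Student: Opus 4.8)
The plan is to verify directly that the unnormalized matrix‑product weights $f(W):=\langle w|X_{W_1}\cdots X_{W_n}|v\rangle$, where $X_d=D$, $X_e=E$, and $X_{a_\ell}=A_\ell$, solve the stationarity (master) equations of the $k$‑PASEP Markov chain. Because the chain restricted to $\Omega^n_{r_1,\ldots,r_{k-1}}$ is irreducible, a nonzero solution of the balance equations is unique up to a scalar, so $\Prob(W)$ equals $f(W)$ divided by $\sum_{W'\in\Omega^n_{r_1,\ldots,r_{k-1}}}f(W')$. The stated form of $Z_{n,r_1,\ldots,r_{k-1}}$ then follows by expanding $(D+y_1A_1+\cdots+y_{k-1}A_{k-1}+E)^n$, whose $y_1^{r_1}\cdots y_{k-1}^{r_{k-1}}$‑coefficient is precisely $\sum_{W'\in\Omega^n_{r_1,\ldots,r_{k-1}}}\prod_i X_{W'_i}$; the factor $\langle w|A_{k-1}^{r_{k-1}}\cdots A_1^{r_1}|v\rangle$ in the statement is an overall rescaling common to the numerator and to $Z$. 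This is the Derrida--Evans--Hakim--Pasquier scheme, in the form already used for Theorem \ref{ansatz2}; the point is to check that the new bulk relations \eqref{ansatz_bulk} are exactly what the scheme requires.

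Fix a state $W$ and write the balance equation $\sum_{W'}\bigl(\mathrm{rate}(W'\to W)\,f(W')-\mathrm{rate}(W\to W')\,f(W)\bigr)=0$ as a sum of contributions, one from each bulk bond $(i,i+1)$ and one from each of the two boundary sites. The bond $(i,i+1)$ contributes $\langle w|X_{W_1}\cdots X_{W_{i-1}}\,h(W_i,W_{i+1})\,X_{W_{i+2}}\cdots X_{W_n}|v\rangle$, where, for a species $X$ heavier than a species $Y$ and $q_{XY}$ the corresponding bulk parameter,
\[
h(X,Y)=-\bigl(X_XX_Y-q_{XY}X_YX_X\bigr),\qquad h(Y,X)=+\bigl(X_XX_Y-q_{XY}X_YX_X\bigr),\qquad h(X,X)=0 .
\]
The crucial step is to introduce auxiliary matrices $\widetilde D=-I$, $\widetilde E=I$, $\widetilde A_\ell=0$ (more generally $\widetilde D=\lambda D-I$, $\widetilde E=\lambda E+I$, $\widetilde A_\ell=\lambda A_\ell$ for any scalar $\lambda$ work equally well), and to check --- and this is where \eqref{ansatz_bulk} enters, once for each pair‑type $\{d,e\}$, $\{d,a_\ell\}$, $\{a_\ell,e\}$, $\{a_i,a_j\}$ --- that for every ordered pair of species $X,Y$, including $X=Y$,
\[
h(X,Y)=\widetilde X_X\,X_Y-X_X\,\widetilde X_Y .
\]
Setting $g_i:=\langle w|X_{W_1}\cdots X_{W_{i-1}}\,\widetilde X_{W_i}\,X_{W_{i+1}}\cdots X_{W_n}|v\rangle$, the bond $(i,i+1)$ then contributes $g_i-g_{i+1}$, so the bulk part of the balance sum telescopes to $g_1-g_n$.

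It remains to cancel $g_1$ and $g_n$ against the boundary terms. Using $\langle w|E=\tfrac1\alpha\langle w|$ one checks that the site‑$1$ contribution equals $-g_1$ in each of the cases $W_1=d$, $W_1=e$, and $W_1=a_\ell$ (in the last case both the boundary move and $\widetilde A_\ell$ vanish); dually, using $D|v\rangle=\tfrac1\beta|v\rangle$, the site‑$n$ contribution equals $+g_n$. Hence the full balance sum is $(g_1-g_n)-g_1+g_n=0$, so $f$ is stationary; normalizing over $\Omega^n_{r_1,\ldots,r_{k-1}}$ and rewriting $Z$ through the generating‑function expansion above completes the proof.

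The computation is routine but sign‑sensitive: the real care goes into recording, for each bulk transition $de$, $da_\ell$, $a_\ell e$, $a_ia_j$, which orientation carries rate $1$ and which carries $q$, and into matching $h(X,Y)$ to $\widetilde X_X X_Y-X_X\widetilde X_Y$ for all pairs simultaneously. I do not anticipate a genuine obstruction beyond this: the only novelty relative to the two‑species case is that the relations $A_iA_j-q_{ij}A_jA_i=0$ make a bond between two $a$‑type letters contribute $h=0$ outright, while the right‑hand sides $A_\ell$ of the $d$--$a_\ell$ and $a_\ell$--$e$ relations are exactly what the commutator form demands, so the telescoping argument goes through verbatim.
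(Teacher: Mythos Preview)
Your proposal is correct. Both your argument and the paper's verify the master equation for the matrix-product weights $f(W)$ by showing that the bulk contributions telescope and are then cancelled by the boundary terms, so at the structural level the approaches coincide.

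The packaging, however, is genuinely different. The paper applies the relations \eqref{ansatz_bulk}--\eqref{ansatz_boundary} directly to each pair $W_iW_{i+1}$, producing terms $f_{n-1}^i(W)$ (the weight of $W$ with its $i$th letter deleted), and then shows by an explicit six-case analysis on the type of each transition site that every such term cancels against a neighbouring one or a boundary term. Your version encapsulates exactly this cancellation in the auxiliary-matrix (``hat'') formalism: choosing $\widetilde D=-I$, $\widetilde E=I$, $\widetilde A_\ell=0$ and checking $h(X,Y)=\widetilde X_X X_Y-X_X\widetilde X_Y$ for every ordered pair is precisely the verification that the paper performs case by case, but it makes the telescoping $\sum_i(g_i-g_{i+1})=g_1-g_n$ and the boundary matching automatic and uniform. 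Your route is shorter and more robust to adding further species or relations; the paper's route is more explicit about \emph{which} deleted-letter term cancels against which, which can be useful when one later wants to read off combinatorial identities from the cancellation pattern. Either way, nothing is missing from your argument.
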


\begin{proof}
For $W$ a word of length $n$, we define the weight 
\[f_n(W)= \langle w| \prod_{i=1}^n D \one_{(W_i=d)} + E \one_{(W_i=e)} + \sum_{i=1}^{k-1}A_i \one_{(W_i=a_i)} |v \rangle.\]

We show that $f_n(W)$ satisfies the detailed balance conditions
\begin{equation}\label{balance}
f_n(W) \sum_{W \rightarrow V} \Pr(W \rightarrow V) = \sum_{X \rightarrow W} f_n(X) \Pr(X \rightarrow W)
\end{equation}
for each $W \in \Omega^n$, where by $\Pr(W \rightarrow V)$ and $\Pr(X \rightarrow W)$ we denote the probabilities of the transitions $W \rightarrow V$ and $X \rightarrow W$ respectively. This would imply that the stationary probability of state $W$ is proportional to $f_n(W)$, which would complete the proof.

We observe that for fixed $W$, the only terms $f_n(X) \Pr(X \rightarrow V)$ for some $X,V \in \Omega^n$ appearing in \eqref{balance}, are precisely the terms:
\begin{itemize}
\item $f_n(e W_2\ldots W_n)\alpha$, 
\item $f_n(W_1\ldots W_{n-1}d)\beta\}$,
\item and $\{f_n(W_1\ldots W_{i-1}B C W_{i+2}\ldots W_n)\cdot 1, -f_n(W_1\ldots W_{i-1}C B W_{i+2}\ldots W_n) \cdot q_{BC}\}$ where $W_i W_{i+1}=BC$ for $B>C$ over $1 \leq i \leq n-1$.
\end{itemize}
This is because these terms are precisely the terms out of which possible transitions can occur to go into or out of $W$. Moreover, whether those terms appear on the left hand side of Equation \eqref{balance} or the right hand side is determined by whether $W_i W_{i+1}=BC$ or $W_i W_{i+1}=CB$ for $B>C$. In other words, the terms in the bulk are given a sign of $(-1)^{\one_{(W_{i+1}>W_i)}}$ for each $i$, and the boundary terms are given a sign of $(-1)^{\one_{(W_1=d)}}$ and $(-1)^{\one_{(W_n=e)}}$ for the left and right boundaries, respectively.

Thus Equation \eqref{balance} can be rewritten as the following:
\begin{multline}\label{balance2}
\one_{(W_1 =d\ \mbox{or}\ e)}(-1)^{\one_{(W_1=d)}} \alpha f_n(eW_2\ldots W_n)\\
+ \one_{(W_n =d\ \mbox{or}\ e)}(-1)^{\one_{(W_n=e)}} \beta f_n(W_1\ldots W_{n-1}d)\\
+ \sum_{i=1}^{n-1} \one_{(W_i \neq W_{i+1})}(-1)^{\one_{(W_{i+1}>W_i)}} \Big( f_n(W_1\ldots W_{i-1}B_i C_i W_{i+2}\ldots W_n) \\
 - q_{B_iC_i}f_n(W_1\ldots W_{i-1}C_i B_i W_{i+2}\ldots W_n)\Big)\\
\end{multline} 
where in the above we use $B_i \vcentcolon= \max(W_i,W_{i+1})$ and $C_i \vcentcolon= \min(W_i,W_{i+1})$.

The reduction rules of Equation \eqref{ansatz_bulk} or \eqref{ansatz_boundary} apply whenever $W_1 = d\ \mbox{or}\ e$, or $W_n = d\ \mbox{or}\ e$, or whenever $W_i\neq W_{i+1}$ for $1 \leq i <n$. We obtain the following.
\begin{align}\label{f_n}
f_n(W' de W'') - q_{0\infty}f_n(W' ed W'') &= f_{n-1}(W' d W'') + f_{n-1}(W' e W''),\\
f_n(W' da_i W'') - q_{0 i}f_n(W' a_id W'') &= f_{n-1}(W' a_i W''),\\
f_n(W' a_ie W'') - q_{i\infty}f_n(W' ea_i W'') &= f_{n-1}(W' a_i W''),\\
f_n(W' a_ia_j W'') - q_{i j}f_n(W' a_ja_i W'') &= 0,\\
\alpha f_n(e W'') &= f_{n-1}(W''),\\
\beta f_n(W' d) & = f_{n-1}(W').
\end{align}

For $W=W_1\ldots W_n$, we introduce the notation $f_{n-1}^i (W) = f_{n-1}(W_1\ldots\hat{W_i}\ldots W_n)$ to be the weight of the word $W$ with the letter $W_i$ cut out. With this notation, using the reduction rules of Equation \eqref{f_n}, Equation \eqref{balance2} becomes the sum $a_0+a_1+\ldots+a_{n-1}+a_n$, where
\[
a_0= \begin{cases}
f_{n-1}^1(W)& W_1=e\\
-f_{n-1}^1(W)&W_1=d
\end{cases}
,\qquad
a_n= \begin{cases}
f_{n-1}^n(W)& W_n=d\\
-f_{n-1}^n(W)&W_n=e
\end{cases},
\]
\begin{equation}\label{total}
\mbox{and} \quad a_i=\begin{cases}
f_{n-1}^i(W) + f_{n-1}^{i+1}(W) & \mbox{if } W_i W_{i+1} = de\ \mbox{or}\ ed\\
f_{n-1}^i(W) & \mbox{if } W_i W_{i+1} = da_i\\
-f_{n-1}^{i+1}(W) & \mbox{if } W_i W_{i+1} = a_id\\
f_{n-1}^{i+1}(W) & \mbox{if } W_i W_{i+1} = a_i e\\
-f_{n-1}^i(W) & \mbox{if } W_i W_{i+1} = e a_i\\
\end{cases} \quad \mbox{for } 1 \leq i \leq n-1.
\end{equation}

Notice that for all $i>j$, the terms $f_n(W'a_ia_j W'')-q_{i,j} f_n(W'a_ja_i W'')=0$.

Suppose there are a total of $s$ transitions in the bulk. For $j=1,\ldots,s$, label the location $i$ where the $j$'th transition occurs (i.e.\ the $j$'th $i$ for which $W_i \neq W_{i+1}$) by $W_{t_j}$. The strategy of our proof is to show that all the $f_{n-1}$ terms that arise from the transitions at the locations $\{t_j\}_{1 \leq j \leq s}$ cancel with other terms Equation \eqref{total} with an opposite sign. We describe these cancellations in the cases that follow.

\begin{enumerate}[(a.)]
\item $W_{t_j}W_{t_j+1}=de$, so the contribution of terms from this transition is $f_{n-1}^{t_{j}}(W)+f_{n-1}^{t_{j}+1}(W)$. Then $W_{t_{j+1}}W_{t_{j+1}+1}$ is necessarily either $ed$ or $ea_t$ for some $t$, in which case it contributes the term $-f_{n-1}^{t_{j+1}}(W)$. Similarly, $W_{t_{j-1}}W_{t_{j-1}+1}$ is necessarily either $de$ or $a_ue$ for some $u$, in which case it contributes the term $-f_{n-1}^{t_{j-1}+1}(W)$. However, the former of these cancels with the term $f_{n-1}^{t_j}(W)$, and the latter cancels with $f_{n-1}^{t_j+1}(W)$, as desired. 

There are two exceptions to the above. First, if $j=1$, then there is no $t_{j-1}$ term. However, in this case, $W$ necessarily begins with a $d$, and so the $f_{n-1}^{t_j}(W)$ term cancels with the left boundary term $-f_{n-1}^1(W)$. Second, if $j=n$, then there is no $t_{j+1}$ term. However, in this case, $W$ necessarily ends with an $e$, and so the $f_{n-1}^{t_j+1}(W)$ term cancels with the right boundary term $-f_{n-1}^n(W)$.

\item $W_{t_j}W_{t_j+1}=ed$, so the contribution of terms from this transition is $-f_{n-1}^{t_{j}}(W)-f_{n-1}^{t_{j}+1}(W)$. Then $W_{t_{j-1}}W_{t_{j-1}+1}$ is necessarily either $de$ or $a_te$ for some $t$, in which case it contributes the term $f_{n-1}^{t_{j-1}+1}(W)$. Similarly, $W_{t_{j+1}}W_{t_{j+1}+1}$ is necessarily either $de$ or $da_u$ for some $u$, in which case it contributes the term $f_{n-1}^{t_{j+1}}(W)$. However, the former of these cancels with the term $-f_{n-1}^{t_j}(W)$, and the latter cancels with $-f_{n-1}^{t_j+1}(W)$, as desired. 

There are two exceptions to the above. First, if $j=1$, then there is no $t_{j-1}$ term. However, in this case, $W$ necessarily begins with an $e$, and so the $-f_{n-1}^{t_j}(W)$ term cancels with the left boundary term $f_{n-1}^1(W)$. Second, if $j=n$, then there is no $t_{j+1}$ term. However, in this case, $W$ necessarily ends with a $d$, and so the $-f_{n-1}^{t_j+1}(W)$ term cancels with the right boundary term $f_{n-1}^n(W)$.

The rest of the cases are similar. Below, we describe the cancellations that occur for each transition location.

\item $W_{t_j}W_{t_j+1}=da_t$, so the contribution of terms from this transition is $f_{n-1}^{t_j}(W)$. This term cancels with the term $-f_{n-1}^{t_{j-1}+1}(W)$ since $W_{t_{j-1}}W_{t_{j-1}+1}$ must equal $ed$ or $a_ud$ for some $u$.

\item $W_{t_j}W_{t_j+1}=a_td$, so the contribution of terms from this transition is $-f_{n-1}^{t_j+1}(W)$. This term cancels with the term $f_{n-1}^{t_{j+1}}(W)$ since $W_{t_{j+1}}W_{t_{j+1}+1}$ must equal $de$ or $da_u$ for some $u$.

\item $W_{t_j}W_{t_j+1}=a_te$, so the contribution of terms from this transition is $f_{n-1}^{t_j+1}(W)$. This term cancels with the term $-f_{n-1}^{t_{j+1}}(W)$ since $W_{t_{j+1}}W_{t_{j+1}+1}$ must equal $ed$ or $ea_u$ for some $u$.

\item $W_{t_j}W_{t_j+1}=ea_t$, so the contribution of terms from this transition is $-f_{n-1}^{t_j}(W)$. This term cancels with the term $f_{n-1}^{t_{j-1}+1}(W)$ since $W_{t_{j-1}}W_{t_{j-1}+1}$ must equal $de$ or $a_ue$ for some $u$.

\end{enumerate}

The cancellations of the boundary terms are treated as the exceptions in cases of (a) and (b).

It is easy to check from the above that every term cancels with another term in Equation \eqref{balance2}, so indeed, it equals zero. Thus the function $f_n$ satisfies the detailed balance in Equation \eqref{balance}, as desired.
\end{proof}


\section{$k$-rhombic alternative tableaux}\label{tableaux_sec}

In this section, we introduce a combinatorial object that generalizes the RAT to provide an interpretation for the probabilities of the $k$-PASEP. This object, called the $k$-rhombic alternative tableau (or $k$-RAT) is of the same flavor as the RAT, and is similarly defined as follows.

\subsection{Definition of $k$-RAT}

To a word $W \in \Omega^n_{r_1,\ldots,r_{k-1}}$, we associate a $k$-rhombic diagram $\Gamma(W)$ as follows. 

\begin{defn} Let $W \in \Omega^n_{r_1,\ldots,r_{k-1}}$, and let $r_0$ be the number of $e$'s and $r_k$ the number of $d$'s in $W$. Let an $e$-edge be a unit edge oriented in the direction $-\pi$. Let a $d$-edge be a unit edge oriented in the direction $-\pi/2$. Let an $a_i$-edge be a unit edge oriented in the direction $-\frac{(k+i)\pi}{2k}$ (see Figure \ref{kRAT_edges}). Define $P_1(W)$ to be the lattice path composed of the $e$-, $a_1$-, \ldots, $a_{k-1}$-, and $d$-edges, placed end to end in the order the corresponding letters appear in the word $W$. Define $P_2(W)$ to be the path obtained by placing in the following order: $r_0$ $e$-edges, $r_1$ $a_1$-edges, $r_2$ $a_2$ edges, and so on, up to $r_{k-1}$ $a_{k-1}$-edges, and then $r_k$ $d$-edges. The \textbf{$k$-rhombic diagram} $\Gamma(W)$ is the closed shape that is identified with the region obtained by joining the northwest and southwest endpoints of $P_1(W)$ and $P_2(W)$ (see Figure \ref{kRAT_maximal}). 

Define a lattice path given by $W$ to be composed of the edges in the order they appear in the word $X$, and let us associate this lattice path with the southeast boundary of our rhombic diagram. We complete the path to form the diagram by drawing in the following order: to connect the top-most corner of the lattice path to its bottom-most corner. 
\end{defn}

\begin{figure}[h]
\begin{minipage}{0.48\textwidth}
\centering
\includegraphics[width=0.45\textwidth]{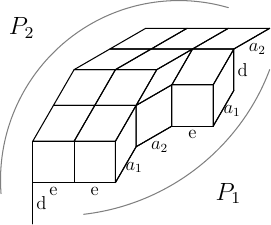}
\caption{$\Gamma(a_2da_1ea_2a_1eed)$ defined by $P_1$ and $P_2$, with a maximal tiling.}
\noindent
\label{kRAT_maximal}
\end{minipage}
\hfill
\begin{minipage}{0.48\textwidth}
\centering
\includegraphics[width=0.45\textwidth]{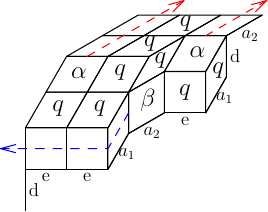}
\caption{A $3$-RAT of type $a_2da_1ea_2a_1eed$ of weight $\alpha^4\beta^4q^8$.}
\noindent
\label{kRAT_example}
\end{minipage}
\end{figure}


\begin{defn} A $de$ tile is a rhombus with $d$ and $e$ edges. A $da_i$ tile is a rhombus with $d$ and $a_i$ edges. An $a_ie$ tile is a rhombus with $a_i$ and $e$ edges. An $a_ia_j$ tile is a rhombus with $a_i$ and $a_j$ edges for $i>j$ (see Figure \ref{kRAT_tiles}). We impose on the tiles the following partial ordering: $a_j X<a_i X'<d X''$, and $Xe<Xa_j<Xa_i<Xd$ for $i>j$ and for any edges $X, X', X''$. If tile C < tile D according to our ordering, we say D is \textbf{heavier} than C.
\end{defn}

\begin{figure}[h]
\begin{minipage}{0.48\textwidth}
\centering
\includegraphics[width=0.8\textwidth]{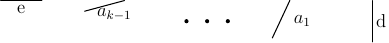}
\caption{$e$-edge, $a_{k-1}$-edge, $\ldots$, $a_{1}$-edge, $d$-edge}
\noindent
\label{kRAT_edges}
\end{minipage}
\hfill
\begin{minipage}{0.48\textwidth}
\centering
\includegraphics[width=0.8\textwidth]{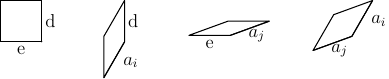}
\caption{A $de$-tile, $da_i$-tile, $a_i e$-tile, and $a_ia_j$-tile (with $i<j$)}
\noindent
\label{kRAT_tiles}
\end{minipage}
\end{figure}

\begin{defn} A \textbf{maximal} tiling on a $k$-rhombic diagram is one in which tiles are always placed from southeast to northwest, and priority is always given to the ``heaviest'' tiles.
\end{defn}


Define a \emph{maximal corner} to be a corner on $P_1(W)$ whose edges $A$ and $B$ are such that for any other corner on that diagram with edges $C$ and $D$, $AB \geq CD$. The canonical way to tile the rhombic diagram with a maximal tiling would be to pick a maximal corner with some edges $A$ and $B$, and place an $AB$ tile adjacent to that corner. The rest of the surface would then itself be a rhombic diagram with the same $P_2$.  We proceed to tile that surface in the same manner until the untiled region has area zero. It is easy to see that such a construction results in a maximal rhombic tiling of the $k$-rhombic diagram. Let us call this tiling $\mathcal{T}(W)$.

\begin{defn} 
An \textbf{$e$-strip} is a maximal strip of adjacent tiles whose edge of adjacency is an $e$-edge, as in Figure \ref{kRAT_strips} (a). A \textbf{$d$-strip} is a maximal strip of adjacent tiles whose edge of adjacency is a $d$-edge, as in Figure \ref{kRAT_strips} (b). (This definition is the same for the $k$-RAT as it is for the RAT).
\end{defn}

We now define a filling of $\mathcal{T}(W)$ with $\alpha$'s and $\beta$'s as follows.


\begin{figure}
\centering
\includegraphics[width=0.7\textwidth]{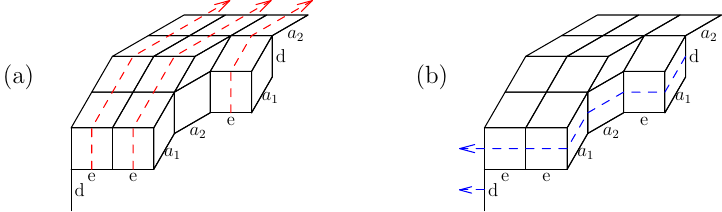}
\caption{(a) $e$-strips and (b) $d$-strips.}
\noindent
\label{kRAT_strips}
\end{figure}

\begin{defn}\label{kRAT_def}
A filling of a $k$-RAT is defined by the following rules.
\begin{itemize}
\item A $de$-tile is allowed to be empty or contain $\alpha$ or $\beta$.
\item A $da_i$ tile is allowed to be empty or contain $\beta$, for each $i$.
\item An $a_i e$ tile is allowed to be empty or contain $\alpha$, for each $i$.
\item An $a_i a_j$ tile must be empty, for each $i>j$.
\item Any tile in the same $e$-strip and above an $\alpha$ must be empty.
\item Any tile in the same $d$-strip and left of a $\beta$ must be empty.
\end{itemize}
\end{defn}


Denote the set of fillings of $\mathcal{T}(W)$ by $\fil(W)$. We assign weights to a filling $F \in \fil(W)$ from the rules above by placing a $q$ in each tile that is not forced to be empty by some $\alpha$ below it in the same $e$-strip, or some $\beta$ to the right in the same $d$-strip. For an example, see Figure \ref{kRAT_example}.\footnote{We allow the parameters $q_{BC}$ that represent swapping rates between B-type and C-type particles to vary in Section \ref{ansatz_sec}. However, to keep the combinatorics ``nice'', we fix all these parameters to equal a single constant $q$.}

\begin{defn} 
Let $W \in \Omega^n$, and $t$ be the number of $d$'s and $\ell$ the number of $e$'s in $W$. For $F \in \fil(W)$, define the \textbf{weight} $\wt(F)$ to be the product of the symbols in the filling of $F$ times $\alpha^t \beta^{\ell}$.
\end{defn}

Define 
\[
\mathcal{Z}_{n,r_1,\ldots,r_{k-1}} = \sum_{W} \sum_{F \in \fil(W)} \wt(F)
\] 
to be the sum of the weights over all $k$-RAT corresponding to states in $\Omega^n_{r_1,\ldots,r_{k-1}}$. Our main result for the $k$-RAT is the following, which we will prove in the next section.

\begin{thm}\label{kRAT_main}
Let $\fil(W)$ denote the set of fillings of the rhombic diagram $\Gamma(W)$ with the maximal tiling, and let $\wt(F)$ denote the weight of a filling in $\fil(W)$.Then the stationary probability of state $W$ of the $k$-PASEP is
\[
\frac{1}{\mathcal{Z}_{n,r_1,\ldots,r_{k-1}}} \sum_{F\in \fil(W)} \wt(F).
\]
\end{thm}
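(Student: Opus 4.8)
The plan is to mimic the strategy used for the two-species case in Section~\ref{matrix_sec}: construct explicit matrices $D$, $A_1,\ldots,A_{k-1}$, $E$ and vectors $\langle w|$, $|v\rangle$ that simultaneously (i) satisfy the hypotheses of the Matrix Ansatz Theorem~\ref{ansatz3} (after rescaling, as in Theorem~\ref{ansatz2}, all $q_{BC}$ are set to a common $q$) and (ii) serve as transfer matrices that build up the $k$-RAT with the maximal tiling, so that the matrix product $\langle w| \prod_i (\ldots) |v\rangle$ is exactly the weight generating function $\sum_{F\in\fil(W)}\wt(F)$. Once both properties are established, combining them with Theorem~\ref{ansatz3} immediately yields that $\Prob(W)$ is proportional to $\sum_{F\in\fil(W)}\wt(F)$ with normalization $\mathcal{Z}_{n,r_1,\ldots,r_{k-1}}$, which is the claim.

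First I would index the matrices by tuples that record the combinatorial statistics needed to continue building a maximal tiling: the number of free $d$-strips and the numbers $r_1,\ldots,r_{k-1}$ of $a_i$-strips already present (the two-species matrices used the pair $(i,j)$ for free $d$-strips and number of $a$'s, so here we need one coordinate per species). The vectors $\langle w|$ and $|v\rangle$ are the analogues of before: $\langle w|$ supported on the all-zero index, $|v\rangle$ the all-ones column. Then I would define $D$, each $A_i$, and $E$ by reading off, exactly as in Lemma~\ref{transfer_matrices}, the generating function for the new tiles created when an $e$-, $a_i$-, or $d$-edge is appended to the southwest corner of $\Gamma(W)$: appending $d$ just adds a free $d$-strip and multiplies the weight by $\alpha$; appending $a_i$ creates a vertical strip of $da_i$ tiles (each free one carrying a $q$ or $\beta$) plus $a_ia_j$ tiles forced empty; appending $e$ creates a strip of $de$ tiles followed by strips of $a_i e$ tiles, with the three cases for where (if anywhere) an $\alpha$ sits, as in the $E$ computation. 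Writing these out gives explicit binomial/$q$-analogue formulas generalizing the given $D$, $A$, $E$.

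With the matrices in hand, the two halves of the proof proceed in parallel with the two-species argument. For the combinatorial side, I would state and prove the $k$-species analogue of Theorem~\ref{DAE_combinatorial}: by induction on word length, $\langle w| X(W) | v\rangle = \sum_{F\in\fil(W)}\wt(F)$, with the induction step being precisely the transfer-matrix lemma above. For the Matrix Ansatz side, I would verify the relations in~\eqref{ansatz_bulk}–\eqref{ansatz_boundary} (in the rescaled form $DE - qED = \alpha\beta(D+E)$, etc., and the new relation $A_iA_j - qA_jA_i = 0$). The bulk relations $DE$, $DA_i$, $A_iE$ are verified by the same local "corner tile contains $\alpha$, $\beta$, or $q$" case analysis as in parts I, II, III of Section~\ref{matrix_sec}, since appending $a_i$ then $e$ (or $d$ then $a_i$, etc.) creates a single corner tile whose content determines the reduction; the new relation $A_iA_j = qA_jA_i$ for $i>j$ corresponds to the fact that an $a_ia_j$ corner tile is always forced empty and carries a bare $q$, so swapping the order of the two $a$-edges just removes that $q$. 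The boundary relations come from $\langle w|$ and $|v\rangle$ being concentrated/constant as before.

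The main obstacle I anticipate is bookkeeping in the $E$ matrix and in verifying the $A_iE$ relation: appending an $e$-edge now spawns $de$ tiles together with $a_ie$ tiles for \emph{several} species simultaneously, so the generating function for the new $e$-strip has more cases (the $\alpha$ can land in a $de$ tile, or in an $a_ie$ tile for any $i$, or nowhere), and one must check that the maximal-tiling convention — "heaviest tiles first" — makes the resulting strip structure canonical and unambiguous, so that Proposition~\ref{equiv}'s tiling-independence still holds and the flips used in the $AE$-type relation still work. I would handle this by first proving the $k$-species tiling-independence (the same flip argument, with flips now defined on all four tile types) and the uniqueness of the maximal tiling, and then the case analysis for $E$ and for $A_iE - qEA_i = \alpha\beta A_i$ reduces, strip by strip, to the two-species computation already carried out. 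Everything else is a routine, if lengthy, generalization.
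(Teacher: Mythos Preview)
Your proposal is essentially the paper's own proof: the paper defines matrices $D,E,A_1,\dots,A_{k-1}$ indexed by tuples $(i,j_1,\dots,j_{k-1})$ recording free $d$-strips and $a_i$-counts, interprets them as transfer matrices for the maximal tiling (the direct analogue of Lemma~\ref{transfer_matrices}), and then checks the relations~\eqref{k-ansatz}. The one technical divergence is that for $A_tA_s=qA_sA_t$ the paper does a direct entrywise computation of the matrix product rather than your combinatorial ``the $a_ia_j$ corner tile is forced empty and contributes a bare $q$'' argument; both are valid, and yours is arguably cleaner and more in line with how the other relations are handled.

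One caution: your plan to ``first prove the $k$-species tiling-independence (the same flip argument)'' as a lemma en route to the $A_iE$ relation is both unnecessary and, as stated, would not go through. The theorem is only about the \emph{maximal} tiling, so global tiling-independence is not needed; and the paper in fact leaves tiling-independence for $k\ge3$ as a conjecture, explicitly noting that for $k\ge3$ flips do not connect all tilings of a $k$-rhombic diagram. What you actually need for the $A_iE$ reduction is only the \emph{local} flip within a single $a_ie$ strip (converting its maximal tiling to its minimal one to expose the $a_ie$ corner tile), and that local argument works exactly as in the two-species case. So drop the global tiling-independence step and argue directly on the strip.
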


\begin{conj}
Let $\mathcal{T'}$ be any tiling of the rhombic diagram $\Gamma(W)$ associated to a state $W$ of the $k$-PASEP. Let $\fil(W,\mathcal{T}')$ denote the set of fillings of tiling $\mathcal{T}'$. Then the stationary probability of state $W$ of the $k$-PASEP is
\[
\frac{1}{\mathcal{Z}_{n,r_1,\ldots,r_{k-1}}} \sum_{F\in \fil(W,\mathcal{T}')} \wt(F).
\]
\end{conj}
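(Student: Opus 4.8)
The plan is to reduce the conjecture to Theorem~\ref{kRAT_main} by proving that the weight generating function $\sum_{F \in \fil(W,\mathcal{T}')} \wt(F)$ is independent of the chosen tiling $\mathcal{T}'$ of $\Gamma(W)$. Granting this tiling-independence, the sum equals its value for the maximal tiling $\mathcal{T}(W)$, which by Theorem~\ref{kRAT_main} is $\mathcal{Z}_{n,r_1,\ldots,r_{k-1}}$ times the stationary probability of $W$, so the conjecture follows. This mirrors the two-species argument exactly: Proposition~\ref{equiv} is the tiling-independence statement there, and Theorem~\ref{prob} plays the role of Theorem~\ref{kRAT_main}.

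For the tiling-independence I would proceed as in \cite{omxgv}. First recall the well-known fact that any two rhombic tilings of a fixed region are connected by a finite sequence of elementary \emph{flips}, each rotating the three rhombi inside a single hexagon (the $k$-species analog of Figure~\ref{flip}); this holds verbatim on the $k$-rhombic diagram since it uses only the geometry of the lattice, not the labels. Hence it suffices to exhibit, for every hexagon $H$ that can occur inside a tiling of $\Gamma(W)$, a weight-preserving bijection between the fillings of the three rhombi in the ``maximal'' orientation of $H$ and those in the ``minimal'' orientation, compatible with the filling of the rest of the tableau — that is, preserving along each $e$-strip the position of a possible $\alpha$, along each $d$-strip the position of a possible $\beta$, and preserving the $q$-placement rule of Definition~\ref{kRAT_def}. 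Composing these local bijections along a flip sequence then gives a global weight-preserving bijection $\fil(W,\mathcal{T}_1)\to\fil(W,\mathcal{T}_2)$, and the equality of generating functions is immediate.

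The heart of the proof, and the step I expect to be the main obstacle, is the case analysis defining the weight-preserving flip for each hexagon type. A hexagon in a $k$-rhombic diagram is determined by an unordered triple of distinct edge directions $\{X,Y,Z\}\subseteq\{e,a_1,\ldots,a_{k-1},d\}$, and the two orientations tile it with the complementary triples of tiles drawn from the $XY$-, $YZ$-, $XZ$-types. For each such triple and each legal filling one must transport the $\alpha/\beta/q$ symbols across the flip so that the total weight, the $e$- and $d$-strip structure outside $H$, and the forced-empty conditions are all preserved. The new features compared with the two-species case are the presence of $a_ia_j$ tiles (always empty, so carrying only a $q$ or nothing) and the larger number of tile types, which produces more sub-cases; but each $e$-strip still consists only of $de$ and $a_ie$ tiles and each $d$-strip only of $de$ and $da_i$ tiles, so the mechanism by which an $\alpha$ (resp. $\beta$) and its trailing string of $q$'s slides along a strip is structurally identical to the one in \cite{omxgv}. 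I would organize the verification by first treating hexagons whose three edge directions lie in $\{e,d\}\cup\{a_i\}$ for a single $i$ (these reduce to the RAT cases already handled in \cite{omxgv}), and then the genuinely new hexagons involving two distinct light species $a_i,a_j$, where at least one tile is an $a_ia_j$ tile and the argument simplifies precisely because that tile is empty.

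Once the weight-preserving flips are in place, tiling-independence of $\sum_{F\in\fil(W,\mathcal{T}')}\wt(F)$ follows, and combined with Theorem~\ref{kRAT_main} this proves the conjecture. A secondary point worth checking along the way — not logically needed for the conjecture, but useful — is that the flips also preserve the number of free $d$-strips and the number of $a$-strips; this would let the transfer-matrix description of Section~\ref{combinatorial_sec} be run with an arbitrary tiling rather than only the maximal one, giving an alternative, Matrix-Ansatz-based route to the same result.
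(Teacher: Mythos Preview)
The statement you are attempting to prove is labeled a \emph{conjecture} in the paper; there is no proof in the paper to compare against. More to the point, the remark immediately following the conjecture pinpoints exactly where the difficulty lies, and it is the opposite of where you place it. For $k\geq 3$ the author asserts that ``flips admit the same weight-preserving bijection'' --- so the hexagon-by-hexagon case analysis you call ``the heart of the proof'' and ``the main obstacle'' is the part the paper regards as already understood --- whereas ``it is no longer necessarily the case that we could obtain any tiling $\mathcal{T}'$ of a $k$-rhombic diagram via flips from the maximal tiling.'' In other words, flip-connectivity of the space of tilings of $\Gamma(W)$ is the open point, and it is precisely what keeps this a conjecture rather than a theorem.

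Your proposal dismisses this step in a single clause: ``this holds verbatim on the $k$-rhombic diagram since it uses only the geometry of the lattice, not the labels.'' That is not justified. The flip-connectivity facts one usually invokes are for lozenge tilings (three edge directions) of simply connected regions, or for rhombus tilings of a convex zonogon via the pseudoline/higher-Bruhat picture. The $k$-rhombic diagram $\Gamma(W)$ for $k\geq 3$ is a non-convex polygon with $k+1$ edge directions, and flip-connectivity of rhombus tilings of such a region is not the elementary fact you treat it as; the author evidently does not have a reference that covers it. If you believe a general result (for instance Kenyon's work on parallelogram tilings of polygons) applies here, then \emph{that} is the substantive thing to cite and verify carefully --- and it would indeed settle the conjecture when combined with the local weight-preserving flips and Theorem~\ref{kRAT_main}. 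Absent such a citation or a direct argument, your proposal has a genuine gap at exactly the step the paper singles out as unresolved, while spending its effort on the part the paper already considers done.
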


\begin{rem} When $k=2$, the conjecture above is true, as we can define a weight-preserving bijection between fillings of the arbitrary tiling $\mathcal{T'}$ and fillings of the maximal tiling $\mathcal{T}$, in terms of ``flips'' \cite{omxgv}. For $k\geq 3$, flips admit the same weight-preserving bijection, but it is no longer necessarily the case that we could obtain any tiling $\mathcal{T'}$ of a $k$-rhombic diagram via flips from the maximal tiling.
\end{rem}

\subsection{Matrix Ansatz proof for the $k$-RAT}

We will prove Theorem \ref{kRAT_main} using the same strategy as in Section \ref{matrix_sec} for the RAT.

We provide matrices $D, E, A_1,\ldots,A_{k-1}$ that correspond to the addition of a $d$-edge, $e$-edge, or $a_i$-edge for $1 \leq i \leq k-1$ to the bottom of the path corresponding to a word $W$ of length $n$ to form a new rhombic diagram with a maximal tiling of size $n+1$ that corresponds to the word $Wd$ (or $We$, or $Wa_i$ for $1 \leq i \leq k-1$ respectively). For $\lambda=\alpha\beta$, we show that these matrices satisfy the Matrix Ansatz relations 
\begin{equation}\label{k-ansatz}
DE-qED=\lambda(D+E), \qquad DA_i - qA_iD = \lambda A_i, \qquad A_iE-qEA_i = \lambda A_i, \qquad A_iA_j=qA_jA_i \mbox{ for } i>j.
\end{equation}
The $k$-species Matrix Ansatz of Theorem \ref{ansatz3} would then imply that the steady state probability of $k$-PASEP state $W$ is proportional to a certain matrix product $\langle w| X(W) |v \rangle$ with the matrices $\{D, E, A_1,\ldots, A_{k-1}\}$. (As in Section \ref{ansatz_sec}, we let $X(W)$ be the word in the matrices $\{D, E,A_1,\ldots, A_{k-1}\}$ that corresponds to the word $W$ in the letters $\{d, e, a_1,\ldots, a_{k-1}\}$.)\footnote{In Equation \eqref{k-ansatz}, the constant $\lambda=\alpha\beta$ is used to slightly generalize the Matrix Ansatz of Theorem \ref{ansatz3} in the same manner that Theorem \ref{ansatz2} generalizes Theorem \ref{ansatz}. The statement of the theorem and the proof are very similar to that of Theorem \ref{ansatz2}, so we do not provide them here.} Similarly to Section \ref{combinatorial_sec}, we show that these matrices give a combinatorial interpretation to the construction of the $k$-RAT. Therefore, the fillings with $\alpha$'s, $\beta$'s, and $q$'s of the maximal tilings of the $k$-rhombic diagrams provide the steady state probabilities for the $k$-PASEP.

In these matrices, the rows are indexed by the tuple $(i,j_1,\ldots,j_{k-1})$ where $i$ is the number of free $d$-strips in a tableau $F$ of the maximal tiling of $\Gamma(W)$ and $j_i$ is the number of $a_i$'s in $W$. The columns of the matrices are indexed by the pair $(i',j_1',\ldots,j_{k-1})$, where $k$ is the number of free $d$-strips in a tableau $F'$ of the maximal tiling of $\Gamma(W d)$ (and respectively, $\Gamma(W e)$ and $\Gamma(W a_s)$ for each $s$)  and $j_i'$ is the number of $a_i$'s in $W d$ (and respectively, $W e$ and $W a_s$ for each $s$). 

Analogously to the construction of the matrices in the two-species PASEP case, we have now

\[
D_{(i,j_1,\ldots,j_{k-1})(i+1,j_1,\ldots,j_{k-1})} = \frac{1}{\beta} \]
and 0 for all other indices.

\[
A_{(i,j_1,\ldots,j_i,\ldots,j_{k-1})(u,j_1,\ldots,j_i+1,\ldots,j_{k-1})} = {i \choose u} q^u \beta^{i-u} \prod_{s=i+1}^{k-1}q^{j_s}
\]
for $0 \leq u \leq i$ and 0 for all other indices.

\[
E_{(i,j_1,\ldots,j_{k-1})(u,j_1,\ldots,j_{k-1})} = \frac{\beta^{i-u}}{\alpha} \left[ {i \choose u}q^u ( q^j+ \alpha [j]_q ) + \alpha \sum_{w=0}^{u-1} {i-u+w \choose i-u} q^w \right]
\]
for $0 \leq u \leq i$ and 0 for all other indices, where we define $j=\sum_{s=1}^{k-1}j_s$, and $[j]_q = q^{j-1}+\ldots+1$.

The relations $DE-qED=D+E$, $DA_i-qA_iD=A_i$, and $A_iE-qEA_i=A_i$ are satisfied by the same arguments as in the two-species PASEP case, except with some additional powers of $q$ in the equations. It remains to show that $A_tA_s=qA_sA_t$ for $t>s$. 

First we compute the $(i,j_1,\ldots,j_s,\ldots,j_t,\ldots,j_{k-1})(u,j_1,\ldots,j_s+1,\ldots,j_t+1,\ldots,j_{k-1})$ entry of $A_tA_s$. (The entries of all other indices are automatically zero).
\begin{multline}
(A_tA_s)_{(i,j_1,\ldots,j_s,\ldots,j_t,\ldots,j_{k-1})(u,j_1,\ldots,j_s+1,\ldots,j_t+1,\ldots,j_{k-1})} \\=\sum_{w=u}^i (A_t)_{(i,j_1,\ldots,j_s,\ldots,j_t,\ldots,j_{k-1})(w,j_1,\ldots,j_s,\ldots,j_t+1,\ldots,j_{k-1})}(A_s)_{(w,j_1,\ldots,j_s,\ldots,j_t+1,\ldots,j_{k-1})(u,j_1,\ldots,j_s+1,\ldots,j_t+1,\ldots,j_{k-1})}\\
=\sum_{w=u}^i {i \choose w} q^w \beta^{i-w} \prod_{r=t+1}^{k-1}q^{j_r} \cdot {w \choose u} q^u \beta^{w-u} \cdot q  \prod_{r=s+1}^{k-1}q^{j_r}\\
=q \sum_{w=u}^i {i \choose w} q^{w+u} \beta^{i-u} \prod_{r=t+1}^{k-1}q^{j_r} \cdot \prod_{r=s+1}^{k-1}q^{j_r}
\end{multline}
Similarly for $A_sA_t$,
\begin{multline}
(A_sA_t)_{(i,j_1,\ldots,j_s,\ldots,j_t,\ldots,j_{k-1})(u,j_1,\ldots,j_s+1,\ldots,j_t+1,\ldots,j_{k-1})}\\ 
= \sum_{w=u}^i (A_s)_{(i,j_1,\ldots,j_s,\ldots,j_t,\ldots,j_{k-1})(w,j_1,\ldots,j_s+1,\ldots,j_t,\ldots,j_{k-1})}(A_t)_{(w,j_1,\ldots,j_s+1,\ldots,j_t,\ldots,j_{k-1})(u,j_1,\ldots,j_s+1,\ldots,j_t+1,\ldots,j_{k-1})}\\
=\sum_{w=u}^i {i \choose w} q^w \beta^{i-w} \prod_{r=s+1}^{k-1}q^{j_r} \cdot {w \choose u} q^u \beta^{w-u} \cdot \prod_{r=t+1}^{k-1}q^{j_r}\\
= \sum_{w=u}^i {i \choose w} q^{w+u} \beta^{i-u} \prod_{r=t+1}^{k-1}q^{j_r} \cdot \prod_{r=s+1}^{k-1}q^{j_r}.
\end{multline}
It is clear that $A_tA_s = qA_sA_t$, as desired.





\section{A Markov chain on the RAT that projects to the two-species PASEP}\label{mc_sec}

We restate here the meaning of a Markov chain that projects to another, and describe the RAT as a Markov chain that projects to the two-species PASEP. Such results exist for the alternative tableaux which project to the regular PASEP, which were originally described in terms of permutation tableaux (which are in simple bijection with the alternative tableaux) in \cite{cw_mc}. Our Markov chain has the same flavor as the existing Markov chain defined by Corteel and Williams. The following definition is from \cite[Definition 3.20]{cw_mc}.\footnote{The results in this section could be extended to the $k$-RAT in the natural way, but we omit the details and proof in this paper.} 

\begin{defn}
Let $M$ and $N$ be Markov chains on finite sets $X$ and $Y$, and let $f$ be a surjective map from $X$ to $Y$. We say that $M$ \emph{projects} to $N$ if the following properties hold:
\begin{itemize}
\item If $x_1, x_2 \in X$ with $Prob_M(x_1 \rightarrow x_2) > 0$, then $Prob_M(x_1 \rightarrow x_2) = Prob_N(f(x_1) \rightarrow f(x_2))$.
\item If $y_1$ and $y_2$ are in $Y$ and $Prob_N(y_1 \rightarrow y_2)>0$, then for each $x_1 \in X$ such that $f(x_1) = y_1$, there is a unique $x_2 \in X$ such that $f(x_2) = y_2$ and $Prob_M (x_1 \rightarrow x_2) > 0$; moreover, $Prob_M (x_1 \rightarrow x_2) = Prob_N (y_1 \rightarrow y_2)$.
\end{itemize}
\end{defn}

Furthermore, we have the following Proposition \ref{mc_walk}, which implies Corollary \ref{cor_projection} below.

Let $\Prob_m(x_0 \rightarrow x; t)$ denote the probability that if we start at state $x_0$ at time 0, then we are in state $x$ at time $t$. From the following proposition of \cite{cw_mc}, we obtain that if $M$ projects to $N$, then a walk on the state diagram of $M$ is indistinguishable from a walk on the state diagram of $N$.

\begin{prop}\label{mc_walk}
Suppose that $M$ projects to $N$. Let $x_0 \in X$ and $y_0,y_1 \in Y$ such that $f(x_0)=y_0$. Then 
\[ 
\Prob_N(y_0 \rightarrow y_1) = \sum_{x' \mbox{ s.t. } f(x')=y_1} \Prob_M(x_0 \rightarrow x_1)
\]
\end{prop}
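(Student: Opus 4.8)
The plan is to derive Proposition~\ref{mc_walk} directly from the two defining properties of a projection, by induction on the length of walks. I would begin by reducing the continuous-time statement to a statement about one-step transitions: since both $M$ and $N$ are Markov chains, it suffices to show the analogous identity for $t$-step transition probabilities for every $t \geq 0$, namely
\[
\Prob_N(y_0 \rightarrow y_1; t) = \sum_{x' \,:\, f(x')=y_1} \Prob_M(x_0 \rightarrow x'; t),
\]
and then the stated form follows by summing over $t$ (or by taking the appropriate limit/generating function, depending on how $\Prob_M(x_0 \to x_1)$ is interpreted in the statement — I would clarify that it denotes the probability of ever reaching $x_1$, i.e. the hitting-type quantity, or else the one-step probability; the cleanest reading is the $t$-step version above).

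The base case $t=0$ is immediate: the left side is $\one_{(y_0=y_1)}$, and the right side is $\sum_{x'\,:\,f(x')=y_1}\one_{(x'=x_0)}$, which equals $\one_{(f(x_0)=y_1)}=\one_{(y_0=y_1)}$ since $f(x_0)=y_0$. For the inductive step, I would expand the $(t+1)$-step probability on the $N$ side via the Chapman–Kolmogorov relation through an intermediate state $y$ after one step, then apply the first projection property to rewrite each one-step probability $\Prob_N(y_0 \to y)$ in terms of a walk in $M$ starting at $x_0$. The key combinatorial point, supplied by the second projection property, is that for each $y$ with $\Prob_N(y_0 \to y)>0$ there is a \emph{unique} $x \in X$ with $f(x)=y$ and $\Prob_M(x_0 \to x)>0$, and moreover $\Prob_M(x_0 \to x) = \Prob_N(y_0 \to y)$. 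This lets me replace the sum over intermediate states $y$ in $Y$ by a sum over the corresponding intermediate states $x$ in $X$ without double-counting or omission, after which the inductive hypothesis applied to the chain started at $x$ (with target set $f^{-1}(y_1)$) closes the argument.

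The main obstacle — really the only subtle point — is bookkeeping the fibers correctly in the inductive step: one must check that as $y$ ranges over one-step successors of $y_0$ and then, for each, $x'$ ranges over $f^{-1}(y_1)$ reachable from the lifted intermediate state $x$, every element of $f^{-1}(y_1)$ reachable from $x_0$ in $t+1$ steps is counted exactly once. This is exactly where the uniqueness clause of the second projection property is essential: without it, two distinct intermediate states $y \neq y'$ could lift to the same $x$, or a single $y$ could lift to multiple valid $x$'s, breaking the bijection between the two sums. I would state this fiber-matching as a short lemma (for each reachable $y$, $f$ restricts to a bijection between $\{x : f(x)=y,\ \Prob_M(x_0\to x)>0 \text{ in one step via the lift}\}$ and the singleton $\{y\}$) and then the rest is a routine manipulation of finite sums. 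Since everything is finite (as $X$ and $Y$ are finite sets), no convergence issues arise. I note this proposition is quoted from \cite{cw_mc}, so a full reproduction may be unnecessary; a brief indication of the inductive argument above suffices.
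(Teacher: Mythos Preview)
The paper does not supply its own proof of this proposition: it is quoted from \cite{cw_mc} and stated without argument, so there is nothing in the paper to compare against. Your closing remark already anticipates this.

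As for correctness, your inductive argument on the number of steps $t$ is the standard one and is sound. The one-step case follows directly from the two projection axioms (property~1 forces the sum to vanish when $\Prob_N(y_0\to y_1)=0$, property~2 handles the positive case), and your Chapman--Kolmogorov expansion for the inductive step is exactly right. One small suggestion: rather than phrasing the bookkeeping as a fiber-matching lemma, it is cleaner to run the induction on the $M$ side first, writing
\[
\sum_{x':f(x')=y_1}\Prob_M(x_0\to x';t+1)=\sum_{x}\Prob_M(x_0\to x;1)\sum_{x':f(x')=y_1}\Prob_M(x\to x';t),
\]
apply the inductive hypothesis to the inner sum, regroup by $y=f(x)$, and invoke the one-step identity. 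This sidesteps the uniqueness bookkeeping entirely, since the one-step identity already encapsulates it. Your remark about clarifying what $\Prob_M(x_0\to x_1)$ means is well taken: the paper introduces the notation $\Prob_M(x_0\to x;t)$ just before the proposition but then drops the $t$ in the statement itself, so the intended reading is indeed the $t$-step version you prove.
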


\begin{cor}\label{cor_projection}
Suppose $M$ projects to $N$ via the map $f$. Let $y \in Y$ and let $X' = \{x \in X\ |\ f(x)=y\}$. Then the steady state probability that $N$ is in state $y$ is equal to the steady state probabilities that $M$ is in any of the states $x \in X'$.
\end{cor}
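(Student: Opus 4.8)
The plan is to derive Corollary~\ref{cor_projection} as a direct consequence of Proposition~\ref{mc_walk}, using the standard fact that the steady state distribution of an irreducible finite Markov chain is the limit of the $t$-step distributions (or, if one prefers to avoid periodicity issues, the Cesàro limit). First I would fix $y \in Y$, set $X' = f^{-1}(y)$, and let $\pi_N$ denote the stationary distribution of $N$ and $\pi_M$ that of $M$. Pick any starting state $x_0 \in X$ with $f(x_0) = y_0$; since $N$ is irreducible on the finite set $Y$, $\pi_N(y) = \lim_{t\to\infty} \Prob_N(y_0 \to y; t)$. Applying Proposition~\ref{mc_walk} at each time $t$ (iterating the one-step statement, or invoking its $t$-step form directly) gives
\[
\Prob_N(y_0 \to y; t) = \sum_{x' \,:\, f(x') = y} \Prob_M(x_0 \to x'; t).
\]

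Next I would take $t \to \infty$ on both sides. The left side converges to $\pi_N(y)$. On the right side, since $X'$ is finite, the limit passes through the finite sum, so each $\Prob_M(x_0 \to x'; t) \to \pi_M(x')$, giving $\pi_N(y) = \sum_{x' \in X'} \pi_M(x')$. Then I would argue that for $x_1, x_2 \in X'$ we in fact have $\pi_M(x_1) = \pi_M(x_2)$: by symmetry of the projection property (the second bullet of the definition of ``projects''), for any $y'$ with a transition into $y$ and any preimage, the contributions are balanced the same way for every element of $X'$; more cleanly, one can observe that the stationary equations for $\pi_M$ restricted to $X'$ are invariant under permuting the elements of $X'$, because transitions among states mapping to a common $y$-value occur with equal probabilities and the inflows from outside $X'$ are symmetric. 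Hence each $\pi_M(x') = \pi_N(y)/|X'|$, which is exactly the claim: the steady state probability of $y$ under $N$ equals the common steady state probability of each $x \in X'$ under $M$ — wait, I should re-read the corollary statement: it says the steady state probability that $N$ is in $y$ equals the steady state probability that $M$ is in \emph{any} of the states $x \in X'$, i.e. $\pi_N(y) = \sum_{x \in X'}\pi_M(x)$. So actually only the first part is needed and the equal-distribution-within-$X'$ remark is a bonus; I would state the corollary's content as $\pi_N(y) = \pi_M(X') := \sum_{x \in X'}\pi_M(x)$ and note the stronger uniformity as a remark if the paper's phrasing demands it.

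The main obstacle is the interchange of limit and stationarity: one must be careful that both $M$ and $N$ are genuinely irreducible (and aperiodic, or else use Cesàro averages) so that the $t$-step distributions converge to the unique stationary distributions independent of the starting point. For the two-species PASEP this irreducibility is clear from the transition rules, and the RAT Markov chain is built (via flips together with edge-addition/removal moves) precisely so that it is irreducible; I would invoke this, citing the construction, rather than re-proving it. A minor secondary point is that Proposition~\ref{mc_walk} is stated for a single step; I would note that iterating it $t$ times (replacing $N$ by its $t$-step chain, whose transition probabilities still satisfy the projection identity by an easy induction on $t$) yields the $t$-step version used above, so no new work is required there.

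\begin{proof}[Proof of Corollary~\ref{cor_projection}]
Fix $y \in Y$ and set $X' = \{x \in X \mid f(x) = y\}$. Both $M$ and $N$ are irreducible on their (finite) state spaces, so each has a unique stationary distribution, which we denote $\pi_M$ and $\pi_N$; moreover $\pi_N(y') = \lim_{t\to\infty} \Prob_N(y_0 \to y'; t)$ for every $y_0, y' \in Y$ (using Cesàro limits if $N$ fails to be aperiodic), and similarly for $\pi_M$. Choose any $x_0 \in X$ and put $y_0 = f(x_0)$. Iterating Proposition~\ref{mc_walk} (equivalently, applying the projection identity to the $t$-step chains, which again satisfy the hypotheses by induction on $t$) gives, for every $t \geq 0$,
\[
\Prob_N(y_0 \to y; t) = \sum_{x' \,:\, f(x') = y} \Prob_M(x_0 \to x'; t).
\]
Letting $t \to \infty$, the left-hand side tends to $\pi_N(y)$, and since the sum on the right is finite we may pass the limit inside to obtain
\[
\pi_N(y) = \sum_{x' \in X'} \pi_M(x').
\]
This is precisely the assertion that the steady state probability that $N$ is in state $y$ equals the total steady state probability that $M$ is in one of the states of $X'$.
\end{proof}
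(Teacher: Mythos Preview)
The paper gives no explicit proof of this corollary; it simply asserts that Proposition~\ref{mc_walk} implies it. Your formal proof is correct and is exactly the natural way to fill in the details: iterate the one-step projection identity to get the $t$-step version, then pass to the limit using finiteness and irreducibility. The interpretation you settle on---$\pi_N(y) = \sum_{x \in X'}\pi_M(x)$---is the intended one, as confirmed by how the corollary is used later (the PASEP probability of $W$ is the \emph{sum} of weights over RAT of type $W$).

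One genuine error in your preliminary discussion deserves flagging: the claim that $\pi_M(x_1) = \pi_M(x_2)$ for all $x_1,x_2 \in X'$ is false in general, and in particular false for the RAT chain. Distinct RAT equivalence classes of the same type $W$ have different weights (different numbers of $q$'s, $\alpha$'s, $\beta$'s), hence different stationary probabilities. Your sketched ``symmetry'' argument does not work: the definition of projection says nothing about transitions \emph{within} a fiber, and there is no reason the inflows into different elements of $X'$ should be equal. You were right to abandon this and re-read the statement; just delete that paragraph rather than leaving it as a ``bonus remark,'' since it is incorrect.
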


\begin{wrapfigure}[14]{r}{0.4\textwidth}
\centering
\includegraphics[width=0.35\textwidth]{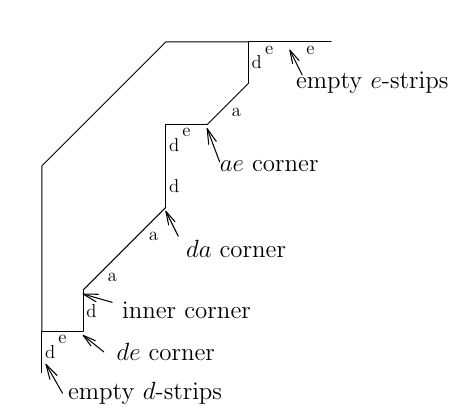}
\caption{The features of a tableau.}
\noindent
\label{RAT_features}
\end{wrapfigure}

In our case, $N$ is the two-species PASEP (which we call the PASEP chain), and $M$ is the Markov chain on the RAT (which we call the RAT chain). 

Recall that $\Omega^n_r$ denotes the states of the two-species PASEP of size $n$ with exactly $r$ ``light'' particles. We specify the states of the RAT chain to be $\Psi_{(n,r)}$, the set of the RAT equivalence classes of size $(n,r)$, based on the fact that different tilings can be chosen to yield equivalent tableaux, as mentioned in Remark \ref{equivalent}. 

Now, we define the transitions on $\Psi_{(n,r)}$ in the RAT chain that correspond to transitions on $\Omega^n_r$ in the PASEP chain. We introduce the following terminology, as in Figure \ref{RAT_features}.

\begin{defn}
A \textbf{corner} is a pair of consecutive $d$ and $e$, $d$ and $a$, or $a$ and $e$-edges on the boundary of a RAT. If there is a $de$ tile, a $da$ tile, or an $ae$ tile (respectively) adjacent to the corresponding edges of the boundary, we call that tile a \textbf{corner tile}. 

An \textbf{inner corner} is a pair of consecutive $e$ and $d$, $a$ and $d$, or $e$ and $a$ edges on the boundary of a RAT.

An \textbf{empty $e$-strip} corresponds to an $e$-edge on the boundary of the RAT that coincides with its top-most boundary.

An \textbf{empty $d$-strip} corresponds to a $d$-edge on the boundary of the RAT that coincides with its left-most boundary.
\end{defn}

\begin{lem}\label{equivalence} Let $\psi \in \Psi_{(n,r)}$ be a RAT equivalence class and let $F \in \psi$. If $F$ has a corner of type $de$, $da$, or $ae$, then there exists an equivalent $F' \in \psi$ that has, respectively, a $de$ tile, a $da$ tile, or an $ae$ tile at that corner.
\end{lem}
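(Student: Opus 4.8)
\textbf{Proof proposal for Lemma \ref{equivalence}.}

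The plan is to reduce the statement to a local computation on tilings and then invoke the flip machinery from \cite{omxgv}. First I would observe that the content of the claim is really about tilings: if $F \in \psi$ lives on a tiling $\mathcal{T}$ of $\Gamma(W)$, I want to produce an equivalent $F'$ on a (possibly different) tiling $\mathcal{T}'$ of the same $\Gamma(W)$ such that $\mathcal{T}'$ has the requested corner tile sitting in the corner, and such that $F'$ is obtained from $F$ by weight-preserving flips. Since \cite{omxgv} establishes that any tiling of $\Gamma(W)$ can be reached from any other by a sequence of flips, and since \cite[Definition 2.16]{omxgv} extends each flip to a weight-preserving flip on fillings, it suffices to exhibit \emph{some} tiling $\mathcal{T}'$ of $\Gamma(W)$ whose corner (the one corresponding to the given boundary corner of type $de$, $da$, or $ae$) is occupied by a tile of the corresponding type. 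Then $F' := $ (image of $F$ under the flips realizing $\mathcal{T} \rightsquigarrow \mathcal{T}'$) is equivalent to $F$, lies in the same class $\psi$, and has the desired corner tile.

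Next I would construct $\mathcal{T}'$ explicitly. Fix the boundary corner in question; say its two edges are $X$ (the ``heavier'' one, in the ordering on edges implicit in the tile ordering) and $Y$, with $XY \in \{de, da, ae\}$. One canonical construction: build $\mathcal{T}'$ by the same inward, ``place tiles from $P_1(W)$'' procedure that defines $\mathcal{T}_{\max}$, but whenever the corner at $\{X,Y\}$ (or its descendant after previous tiles are removed) becomes available, give first priority to placing an $XY$ tile there. That this yields a legitimate tiling follows from the fact that after peeling off the $XY$ tile the remaining region is again a rhombic diagram with the same $P_2$, so the construction can be completed exactly as in the construction of $\mathcal{T}_{\max}$. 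By construction $\mathcal{T}'$ has an $XY$ tile adjacent to the chosen boundary corner, which is precisely the corner tile required.

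The main obstacle I anticipate is bookkeeping rather than conceptual: I must make sure the flips connecting $\mathcal{T}$ to $\mathcal{T}'$ can be chosen so that the requested corner tile, once created, is never destroyed by a subsequent flip (otherwise the final tableau need not have the corner tile). The clean way around this is to argue in reverse: start from $\mathcal{T}'$ (which has the corner tile), reach $\mathcal{T}$ by flips, and note that the corner tile adjacent to the boundary is never part of a maximal or minimal hexagon that involves the exterior, so in fact the boundary corner tile is fixed under all flips — equivalently, every tiling of $\Gamma(W)$ in which that corner is occupied by an $XY$ tile is flip-equivalent within that family, and the corner tile is a flip invariant of the subfamily. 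Having pinned this down, the weight-preservation of the extended flips from \cite{omxgv} immediately gives $\wt(F') = \wt(F)$ and $F' \in \psi$, completing the proof. The remaining cases ($da$ and $ae$ corners) are handled identically, only the letter $Y$ and the tile shape change.
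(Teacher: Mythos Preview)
Your first two paragraphs already constitute a complete proof: once you exhibit a tiling $\mathcal{T}'$ of $\Gamma(W)$ with an $XY$ tile at the given corner, flip-connectivity of tilings together with the weight-preserving extension to fillings (\cite[Definition 2.16]{omxgv}) yields an $F' \in \psi$ living on $\mathcal{T}'$, and $F'$ has the corner tile simply because $\mathcal{T}'$ does. The ``obstacle'' you raise in the third paragraph is illusory---the terminal tiling of your flip sequence is $\mathcal{T}'$ by construction, so what happens at intermediate tilings is irrelevant---and the ``fix'' you propose is actually false. For $da$ and $ae$ corners the corner tile is \emph{not} a flip invariant: if it were, a tableau $F$ whose tiling lacks an $XY$ tile at that corner could never be equivalent to one that has it, and the lemma would fail outright. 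The paper's Figure~\ref{DAE_corner} displays exactly a hexagon involving the $da$ corner tile, and a single flip there creates or destroys that corner tile. You should simply delete the third paragraph.

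For comparison, the paper's argument is more hands-on. The $de$ case is declared trivial (every tiling of $\Gamma(W)$ necessarily has a $de$ tile at a $de$ corner), the $ae$ case is reduced to the $da$ case by the transpose symmetry $d \leftrightarrow e$, and for $da$ the paper identifies the unique local obstruction---a stack of $j\ge 1$ $de$ tiles atop $j$ $ae$ tiles with a $da$ tile to its left, as in Figure~\ref{DAE_corner}(a)---and shows that $j$ explicit flips produce a $da$ tile at the corner. Your abstract route via global flip-connectivity (paragraphs 1--2) is cleaner and avoids this case analysis, at the cost of appealing to the full strength of the flip-connectivity result from \cite{omxgv}.
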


\begin{figure}[h]
\centering
\includegraphics[width=0.8\textwidth]{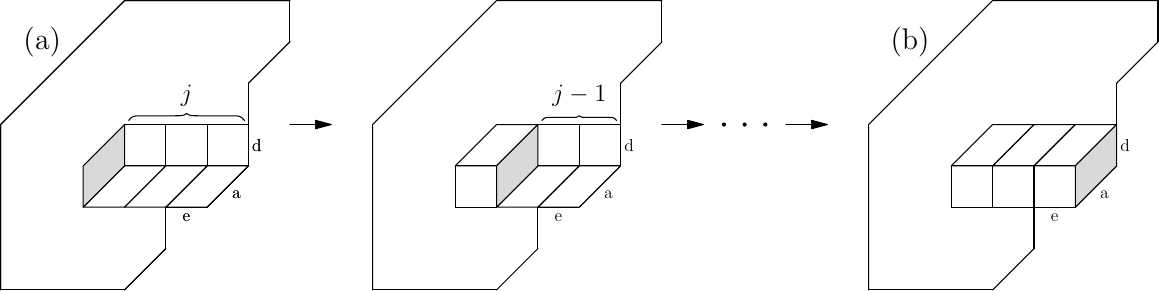}
\caption{If the boundary of the tableau contains consecutively $d$, $a$, and $e$, and there is no $da$ tile adjacent to the $da$ corner, then a ``stack of boxes'' as in (a) must occur in the tiling, for some value of $j$. After performing $j$ flips, the configuration in (b) is obtained, with a $da$ tile adjacent to the $da$ corner, as desired.}
\noindent
\label{DAE_corner}
\end{figure}

\begin{proof}
First, it is clear that any tiling of a rhombic diagram with a $de$ corner must have a $de$ tile at that corner, so for the $de$ case the lemma is obvious.

Now, for the $da$ and the $ae$ cases, it suffices to prove the lemma for only one of them, since by taking the transpose of a tableau and swapping the roles of $\alpha$ and $\beta$, we end up exchanging the $d$'s with the $e$'s (and consequently the $da$ corners with the $ae$ corners), and so by symmetry, these cases will have the same properties. Thus we will prove the $da$ case.

First, if the $da$ corner already has a $da$ tile adjacent to it, we are done. Thus we assume there is not a $da$ tile, which means the tiling of the rhombic diagram must contain the tiles shown in Figure \ref{DAE_corner} (a). More precisely, as seen in the figure, the tiles must be a row of $j \geq 1$ $de$ tiles on top of $j$ $ae$ tiles, with one adjacent $da$ tile on the left. Now it is easy to check that with $j$ flips, we end up with the configuration in Figure \ref{DAE_corner} (b), and moreover, there will a $\beta$ in the corner $da$ tile in the tiling (b) if and only if there is a $\beta$ in the right-most $de$ tile in the tiling (a) (and otherwise there will be a $q$). Thus with $j$ flips, we obtain an equivalent tableau with a $da$ tile in the $da$ corner, as desired.
\end{proof}

Based on the above lemma, we make the following definition:

\begin{defn}\label{corner_flip}
Let $F$ be a tableau with a corner. We call that corner a \textbf{$q$-corner} (or an \textbf{$\alpha$-corner}, or a \textbf{$\beta$-corner}) if a tableau $T$ contains a $q$ in the tile adjacent to that corner (or respectively, an $\alpha$, or a $\beta$) for some $T$ that is equivalent to $F$ and has a corner tile adjacent to the corner.
\end{defn}

\begin{figure}[h]
\centering
\includegraphics[width=0.8\textwidth]{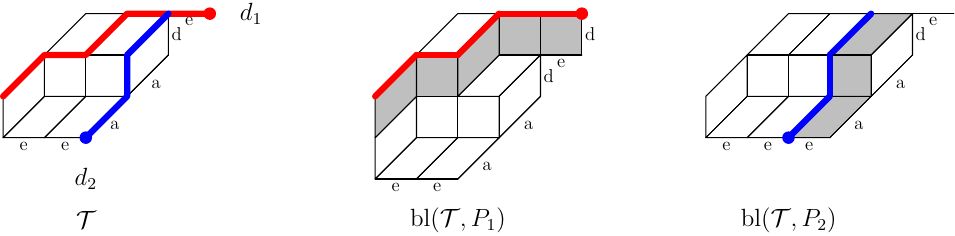}
\caption{Let $d_1$ and $d_2$ be the indicated $d$- and $e$-paths on $\mathcal{T}$. Then $\mathcal{T}$ is the compression of $\bl(\mathcal{T},d_1)$ and $\bl(\mathcal{T},d_2)$ at the highlighted $d$- and $e$-strips, respectively.}
\noindent
\label{compression}
\end{figure}

\begin{defn}\label{west-path}
Let $\mathcal{T}$ be a tiling of a rhombic diagram $F$. A \textbf{$d$-path} on $\mathcal{T}$ is a path from some point on $P_1(F)$ to some point on $P_2(F)$ consisting of A- and $e$-edges. An \textbf{$e$-path} on $\mathcal{T}$ is a path from some point on $P_1(F)$ to some point on $P_2(F)$ consisting of $d$- and $a$-edges. We introduce the operation of \textbf{compressing} a $d$-strip in $\mathcal{T}$ to obtain a new tiling $\mathcal{T}'$ with a $d$-path in place of the $d$-strip (respectively, $e$-strip and $e$-path). We also introduce the inverse operation of \textbf{blowing up} a $d$-path in $\mathcal{T}'$ to obtain a new tiling $\mathcal{T}''$ with a $d$-strip in place of the $d$-path (respectively, $e$-path and $e$-strip). 

Compressing a $d$-strip means selecting its northern border to be the $d$-path, and then gluing together the north and south $e$-edges and $a$-edges of each tile in the $d$-strip, thereby replacing the $d$-strip by the $d$-path. Similarly, compressing an $e$-strip means selecting its western border to be the $e$-path, and then gluing together the west and east $d$-edges and $a$-edges of each tile in the $e$-strip, thereby replacing the $e$-strip with the $e$-path. If $\mathfrak{s}$ is a $d$- or $e$-strip of $\mathcal{T}$, then we denote by $\com(\mathcal{T},\mathfrak{s})$ the new tiling $\mathcal{T}'$ that results from compressing at $\mathfrak{s}$.

For the inverse, blowing up a $d$-path means replacing each $e$-edge of the path with a $de$ tile, and each $a$-edge with a $da$ tile, to obtain a $d$-strip from the new tiles. Similarly, blowing up an $e$-path means replacing each $d$-edge of the path with a $de$ tile, and each $a$-edge with an $ae$ tile, to obtain an $e$-strip from the new tiles. If $\mathfrak{p}$ is a $d$- or $e$-path of $\mathcal{T}$, then we denote by $\bl(\mathcal{T},\mathfrak{p})$ the new tiling $\mathcal{T}''$ that results from blowing up at $\mathfrak{p}$. Figure \ref{compression} illustrates these definitions.

By convention, if $\mathfrak{p}$ is a path of length 0, then blowing up $\mathfrak{p}$ results in replacing it by an empty $e$-strip or an empty $d$-strip (depending on whether $\mathfrak{p}$ coincides with the west boundary or the north boundary of the rhombic diagram, respectively). Conversely, compression of an empty $e$-strip or an empty $d$-strip results in replacing those strips with a single point.
\end{defn}

It is easy to see that compressing is the inverse of blowing up.


Let $F$ be a RAT of size $(n,r,k)$ with tiling $\mathcal{T}$, and let $\psi(F) \in \Psi_{(n,r)}$ denote the equivalence class that $F$ belongs to. Below we describe the RAT chain transitions on $F$, which are also transitions on $\psi(F)$.

\subsection{``Heavy'' particle enters from the left.} If $F$ has an empty $e$-strip $\mathfrak{e}$, then there is a transition in the RAT chain from $F$ that corresponds to a ``heavy'' particle entering from the left in the PASEP. Let the type of $F$ be $e W$.

\begin{figure}[h]
\centering
\includegraphics[width=\textwidth]{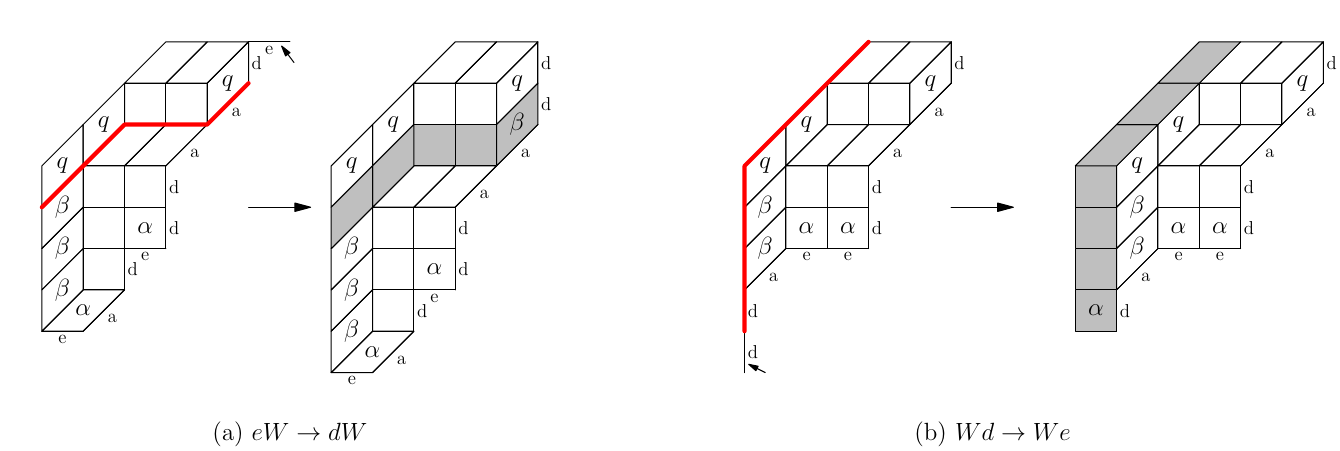}
\caption{For both examples, let the left tableau have tiling $\mathcal{T}$, and denote the indicated empty ($e$- or $d$-) strip by $\mathfrak{e}$ and the marked ($e$- or $d$-) path by $\mathfrak{p}$. Then we obtain a new diagram with tiling $\bl(\com(\mathcal{T},\mathfrak{e}),\mathfrak{p})$, and (a) a $\beta$ is placed in the resulting $d$-strip or (b) an $\alpha$ is placed in the resulting $e$-strip.}
\noindent
\label{mc_legs}
\end{figure}

We define a new RAT $T$ as follows. Let $p$ be the south-most point on $P_1(F)$ (the southeast boundary of $F$) such that there are exactly $n-k-1$ $e$- and $a$- edges on $P_1(F)$ southwest of $p$. Let $\mathfrak{p}$ be any $d$-path originating at $p$. Let $\mathcal{T}' = \bl(\com(\mathcal{T},\mathfrak{e}),\mathfrak{p})$. It is easy to check that $\mathcal{T}'$ is a valid tiling of $\Gamma(d X)$ which has size $(n,r,k+1)$.

If $n-k-1>0$, the new $d$-strip of $\mathcal{T}'$ is non-empty, so we place a $\beta$ in its right-most tile, which is valid since that tile must be either a $de$ tile or a $da$ tile. Furthermore, $p$ was chosen to be the south-most point such that there are $n-k-1$ $e$- and $a$-edges southwest of it, so the right-most tile of the new $d$-strip is also the bottom-most tile of the $a$- or $e$-strip it lies in, and thus does not interfere with the rest of the filling of the tableau. We define $\Prob_{RAT}(F \rightarrow T) = \frac{\alpha}{N+1}$. The weight of $F$ with the exception of $\mathfrak{e}$ equals the weight of $T$ with the exception of the newly added $d$-strip. The weight of the new $d$-strip of $T$ is $\alpha\beta$, and the weight of $\mathfrak{e}$ is $\beta$. Therefore, $\wt(T)=\frac{\alpha\beta}{\beta}\wt(F)$, and so $\wt(F)\Prob_{RAT}(F \rightarrow T) = \frac{\wt(T)}{N+1}$.

For the exceptional case, if $n-k-1=0$, then the newly added $d$-strip of $T$ is empty, and thus has total weight $\alpha$. In this case, the PASEP state corresponding to $F$ is of the form $ed^{n-1}$, and the PASEP state corresponding to $T$ is $d^n$. Then $\wt(F)=\beta\alpha^{n-1}$, $\wt(T)=\alpha^{n-1}$, and so in this case we have $\wt(F)\Prob_{RAT}(F \rightarrow T) = \frac{\beta \wt(T)}{N+1}$.

\subsection{``Heavy'' particle exits from the right.} If $F$ has an empty $d$-strip $\mathfrak{e}$, then there is a transition in the RAT chain from $F$ that corresponds to a ``heavy'' particle exiting from the right in the PASEP. Let the type of $F$ be $W d$.

We define a new RAT $T$ as follows. Let $p$ be the east-most point on $P_1(F)$ such that there are exactly $r+k-1$ $d$- and $a$- edges on $P_1(F)$ northeast of $p$. Let $\mathfrak{p}$ be any $e$-path originating at $p$. Let $\mathcal{T}' = \bl(\com(\mathcal{T},\mathfrak{e}),\mathfrak{p})$. It is easy to check that $\mathcal{T}'$ is a valid tiling of $\Gamma(W e)$ which has size $(n,r,k-1)$.

If $r+k-1>0$, the new $e$-strip of $\mathcal{T}'$ is non-empty, so we place an $\alpha$ in its bottom-most tile, which is valid since that tile must be either a $de$ tile or an $ae$ tile. Furthermore, $p$ was chosen to be the east-most point such that there are $r+k-1$ $d$- and $a$-edges northeast of it, so the bottom-most tile of the new $e$-strip is also the right-most tile of the $a$- or $d$-strip it lies in, and thus does not interfere with the rest of the filling of the tableau. We define $\Prob_{RAT}(F \rightarrow T) = \frac{\beta}{N+1}$. The weight of $F$ with the exception of $\mathfrak{e}$ equals the weight of $T$ with the exception of the newly added $d$-strip. The weight of the new $d$-strip of $T$ is $\alpha\beta$, and the weight of $\mathfrak{e}$ is $\alpha$. Therefore, $\wt(T)=\frac{\alpha\beta}{\alpha}\wt(F)$, and so $\wt(F)\Prob_{RAT}(F \rightarrow T) = \frac{\wt(T)}{N+1}$.

For the exceptional case, if $r+k-1=0$, then the newly added $e$-strip of $T$ is empty, and thus has total weight $\beta$. In this case, the PASEP state corresponding to $F$ is of the form $e^{n-1}d$, and the PASEP state corresponding to $T$ is $e^n$. Then $\wt(F)=\beta\alpha^{n-1}$, $\wt(T)=\alpha^{n-1}$, and so in this case we have $\wt(F)\Prob_{RAT}(F \rightarrow T) = \frac{\alpha \wt(T)}{N+1}$.

\begin{figure}[h]
\centering
\includegraphics[width=\textwidth]{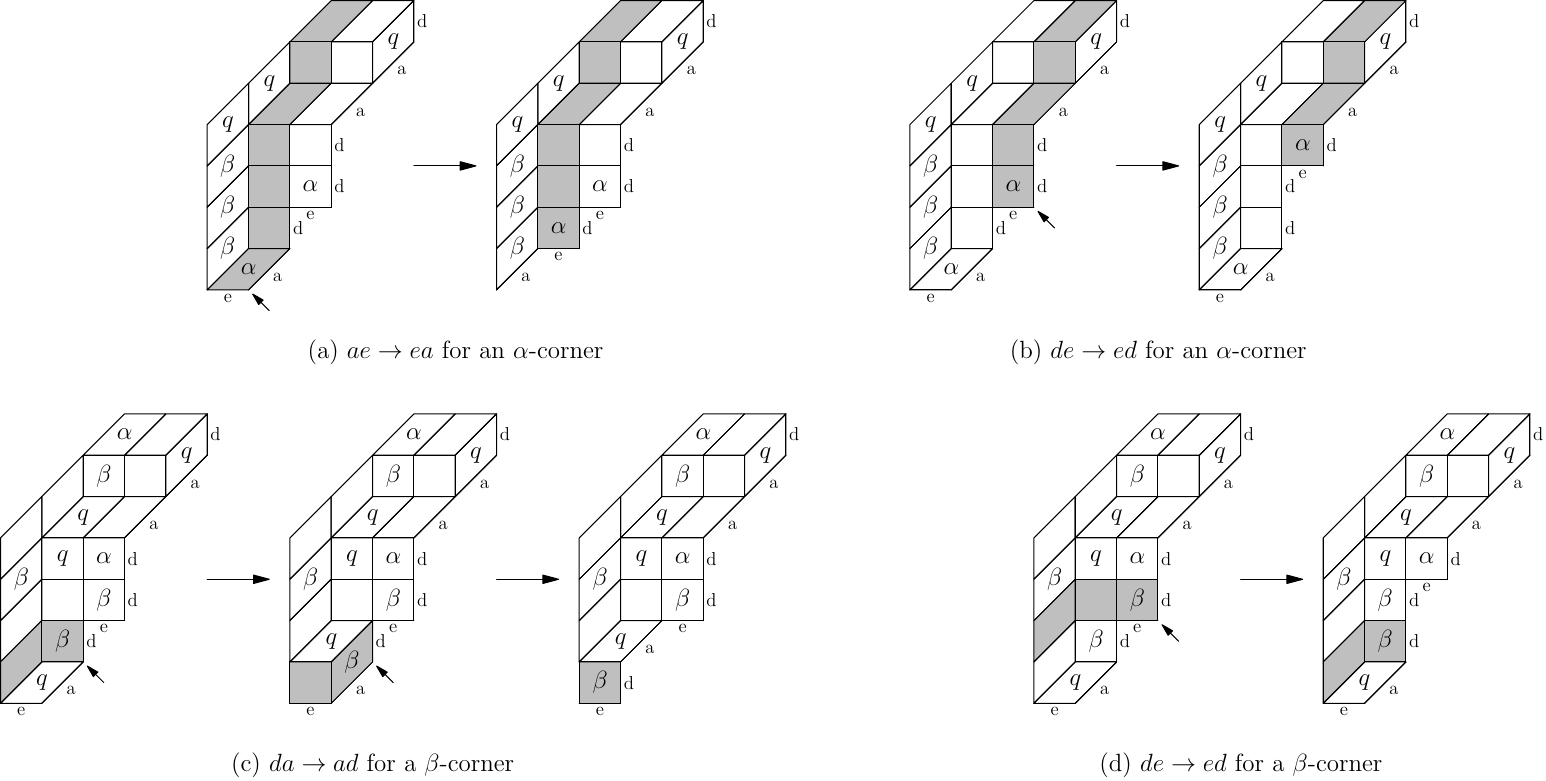}
\caption{(a) and (b) show transitions at an $\alpha$-corner, and (c) and (d) show transitions at a $\beta$-corner.}
\noindent
\label{mc_ab}
\end{figure}

\subsection{A ``heavy'' particle exchanges with a ``hole''.} If $F$ has a $de$ corner, then there is a transition in the RAT chain from $F$ that corresponds to a ``heavy'' particle swapping places with a ``hole'' in the PASEP. Let the type of $F$ be $W de Y$, and suppose it has tiling $\mathcal{T}$. The $de$ corner necessarily corresponds to a $de$ tile. This tile contains an $\alpha$, a $\beta$, or a $q$. We describe these three cases below.

\subsubsection{The $de$ corner tile contains a $\beta$.} We define a new RAT $T$ as follows. Let the $d$-strip containing the $de$ corner tile have length $\lambda$. Let $p$ be the south-most point on $P_1(F)$ such that there are exactly $\lambda-1$ $e$- and $a$- edges on $P_1(F)$ southwest of $p$. Let $\mathfrak{p}$ be any $d$-path originating at $p$. Let $\mathcal{T}' = \bl(\com(\mathcal{T},\mathfrak{e}),\mathfrak{p})$. It is easy to check that $\mathcal{T}'$ is a valid tiling of $\Gamma(W ed Y)$, as in Figure \ref{mc_ab} (d). 

If $\lambda-1>0$, then we place a $\beta$ in the right-most box of the newly inserted $d$-strip $\mathfrak{s}$. Such a filling is valid since the right-most box (containing the new $\beta$) is necessarily the bottom-most box of the $e$- (or $a$-) strip that contains it, and so $\mathfrak{s}$ does not interfere with any of the other tiles in $T$.  We define $\Prob_{RAT}(F \rightarrow T) = \frac{1}{N+1}$. The weight of $T$ equals the weight of $F$. Therefore, $\wt(F)\Prob_{RAT}(F \rightarrow T) = \frac{\wt(T)}{N+1}$.

If $\lambda-1=0$, then necessarily $F$ corresponds to a PASEP state $W ded^j$ for some $j$, and $T$ corresponds to the state $W ed^{j+1}$. The newly added $d$-strip is empty, and so $\wt(F)=\beta\wt(T)$. Therefore, $\wt(F)\Prob_{RAT}(F \rightarrow T) = \frac{\beta\wt(T)}{N+1}$.

\subsubsection{The $de$ corner tile contains an $\alpha$.} We define a new RAT $T$ as follows. Let the $e$-strip containing the $de$ corner tile have length $\lambda$. Let $p$ be the east-most point on $P_1(F)$ such that there are exactly $\lambda-1$ $d$- and $a$- edges on $P_1(F)$ northeast of $p$. Let $\mathfrak{p}$ be any $e$-path originating at $p$. Let $\mathcal{T}' = \bl(\com(\mathcal{T},\mathfrak{e}),\mathfrak{p})$. It is easy to check that $\mathcal{T}'$ is a valid tiling of $\Gamma(W ed Y)$, as in Figure \ref{mc_ab} (b). 

If $\lambda-1>0$, then we place an $\alpha$ in the bottom-most box of the newly inserted $e$-strip $\mathfrak{s}$. Such a filling is valid since the bottom-most box (containing the new $\alpha$) is necessarily the right-most box of the $d$- (or $a$-) strip that contains it, and so $\mathfrak{s}$ does not interfere with any of the other tiles in $T$. We define $\Prob_{RAT}(F \rightarrow T) = \frac{1}{N+1}$. The weight of $T$ equals the weight of $F$. Therefore, $\wt(F)\Prob_{RAT}(F \rightarrow T) = \frac{\wt(T)}{N+1}$.

If $\lambda-1=0$, then necessarily $F$ corresponds to a PASEP state $e^{j}de Y$ for some $j$, and $T$ corresponds to the state $e^{j+1}d Y$. The newly added $d$-strip is empty, and so $\wt(F)=\alpha\wt(T)$. Therefore, $\wt(F)\Prob_{RAT}(F \rightarrow T) = \frac{\alpha\wt(T)}{N+1}$.

\subsubsection{The $de$ corner tile contains a $q$.} We define a new RAT $T$ by simply removing the $de$ corner tile from $F$. We define $\Prob_{RAT}(F \rightarrow T) = \frac{1}{N+1}$. Since a single tile of weight $q$ was removed, $\wt(F) = q \wt(T)$. Therefore, $\wt(F)\Prob_{RAT}(F \rightarrow T) = \frac{q\wt(T)}{N+1}$.

\subsection{A ``heavy'' particle exchanges with a ``light'' particle.} If $F$ has a $da$ corner, then there is a transition in the RAT chain from $F$ that corresponds to a ``heavy'' particle swapping places with a ``light'' particle in the PASEP. Let the type of $F$ be $W da Y$. By Lemma \ref{equivalence}, we can assume that $F$ has a $da$ tile at the $da$ corner. This tile contains a $\beta$ or a $q$. We describe these two cases below.

\subsubsection{The $da$ corner tile contains a $\beta$.} We perform exactly the same operation as for the $de$ case containing a $\beta$. Once again, we define $\Prob_{RAT}(F \rightarrow T) = \frac{1}{N+1}$. In all but the exceptional case, the weight of $T$ equals the weight of $F$. Therefore, $\wt(F)\Prob_{RAT}(F \rightarrow T) = \frac{\wt(T)}{N+1}$.

In the special case, if $F$ corresponds to a PASEP state $W dad^j$ for some $j$, and $T$ corresponds to the state $W ad^{j+1}$, then we have $\wt(F)=\beta\wt(T)$. Therefore, $\wt(F)\Prob_{RAT}(F \rightarrow T) = \frac{\beta\wt(T)}{N+1}$.

\subsubsection{The $da$ corner tile contains a $q$.} We perform exactly the same operation as for the $de$ case containing a $q$. Again, $\wt(F)\Prob_{RAT}(F \rightarrow T) = \frac{q\wt(T)}{N+1}$.

\subsection{A ``light'' particle exchanges with a ``hole''.} If $F$ has an $ae$ corner, then there is a transition in the RAT chain from $F$ that corresponds to a ``light'' particle swapping places with a ``hole'' in the PASEP. Let the type of $F$ be $W ae Y$. By Lemma \ref{equivalence}, we can assume that $F$ has an $ae$ tile at the $da$ corner. This tile contains an $\alpha$ or a $q$. We describe these two cases below.

\subsubsection{The $ae$ corner tile contains an $\alpha$.} We perform exactly the same operation as for the $de$ case containing an $\alpha$. Once again, we define $\Prob_{RAT}(F \rightarrow T) = \frac{1}{N+1}$. In all but the exceptional case, the weight of $T$ equals the weight of $F$. Therefore, $\wt(F)\Prob_{RAT}(F \rightarrow T) = \frac{\wt(T)}{N+1}$.

In the special case, if $F$ corresponds to a PASEP state $e^jae Y$ for some $j$, and $T$ corresponds to the state $e^{j+1}a Y$, then we have $\wt(F)=\alpha\wt(T)$. Therefore, $\wt(F)\Prob_{RAT}(F \rightarrow T) = \frac{\alpha\wt(T)}{N+1}$.

\subsubsection{The $ae$ corner tile contains a $q$.} We perform exactly the same operation as for the $de$ case containing a $q$. Again, $\wt(F)\Prob_{RAT}(F \rightarrow T) = \frac{q\wt(T)}{N+1}$.


\subsection{A lighter particle type exchanges with a heavier particle type.} We describe only the $W ed Y \rightarrow W de Y$ transition, but the same holds true for $W ad Y \rightarrow W da Y$ and $W ea Y \rightarrow W ae Y$ if the corresponding letters are used. If $F$ has an inner $ed$ corner, then there is a transition in the RAT chain from $F$ that corresponds to a ``hole'' swapping places with a ``heavy'' particle in the PASEP. Let the type of $F$ be $W ed Y$. Then to form the tableau $T$, we simply append a $de$ tile to the outside of $F$, adjacent to the $ed$ inner corner. We place a $q$ inside the tile, and thus obtain a valid filling $T$ of type $W de Y$ with a $q$ in its $de$ corner. 

We define $\Prob_{RAT}(F \rightarrow T) = \frac{q}{N+1}$. Therefore, since $q\wt(F) =\wt(T)$, we have  $\wt(F)\Prob_{RAT}(F \rightarrow T) = \frac{\wt(T)}{N+1}$.





The operator $\pr$ is clearly a surjective map from the set $\Psi_{(n,r)}$ to $\Omega^n_r$. It is easy to see by our description of the transitions on the RAT chain that it indeed projects to the PASEP chain.

\begin{figure}[h]
\centering
\includegraphics[width=0.8\textwidth]{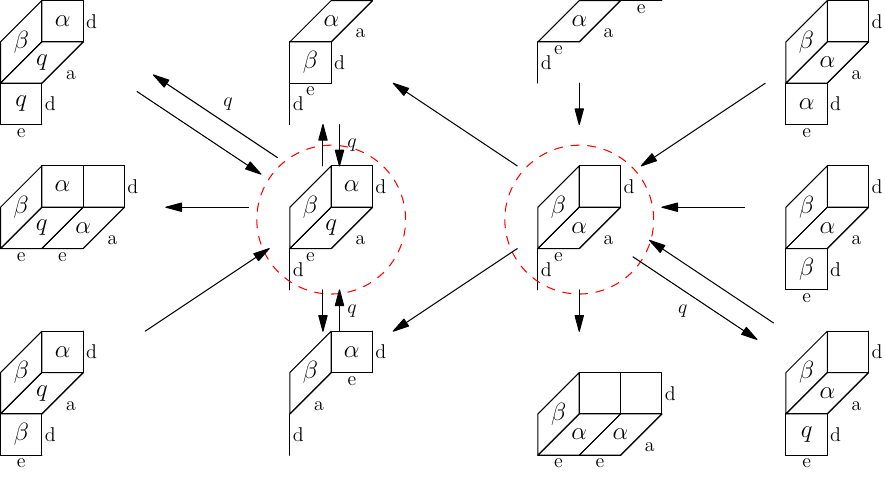}
\caption{Some of the transitions on some of the states in $\Omega^4_1$. All the transitions involving the circled tableaux are included.}
\noindent
\label{mc_example}
\end{figure}

\subsection{Stationary probabilities of the RAT chain}

We carefully summarize the transitions out of a RAT $F$ (and consequently from the equivalence class of $F$), depending on the chosen corner at which the transition occurs. We will be referring to these cases further on. First we make the following definitions. Let $F$ have size $(n,r,k)$ and let $\lambda=(\lambda_1,\ldots,\lambda_{k+r})$ be the partition given by the lengths of the $d$-strips from top to bottom. Assume that $\lambda$ has at least one non-zero part.
\begin{defn} 
We define $\lambda_R$ be the indicator that equals 1 if $F$ has an empty $e$-strip, and 0 otherwise. We define $\lambda_L$ be the indicator that equals 1 if $F$ has n empty $d$-strip, and 0 otherwise.
\end{defn}
\begin{defn}
We call a \emph{$q$-corner} a corner that contains a $q$. (Refer to Definition \ref{corner_flip} for the precise definition.)
We call a \emph{top-most corner} an $\alpha$- or $\beta$-corner such that the length of the $d$-strip containing it equals $\lambda_1$. (If the corner in the top-most position contains a $q$, we do not call it a top-most corner). We define the indicator $\delta^R_{\beta}$ which equals 1 if the top-most corner contains a $\beta$, and 0 if it contains an $\alpha$. Analogously, we call a \emph{bottom-most corner} an $\alpha$- or $\beta$-corner such that the length of the row containing it equals the length of the smallest non-zero row of $\lambda$. (If the corner in the bottom-most position contains a $q$, we do not call it a bottom-most corner).  We define the indicator $\delta^L_{\alpha}$ which equals 1 if the bottom-most corner contains an $\alpha$, and 0 if it contains a $\beta$. We call a \emph{middle corner} an $\alpha$- or $\beta$-corner that is neither a top-most corner or a bottom-most corner (and not a $q$-corner).
\end{defn}

\subsubsection{Summary of transitions $F \rightarrow T$}\label{summary}

Denote by $\pi(F \rightarrow T)$ the rate of transition from tableau $F$ to $T$ (where by rate we mean the unnormalized probability). We obtain the following cases for the transitions from $F$ to $T$.
\begin{enumerate}
\item\label{mid} For a transition at a middle corner, a top-most corner with $\delta^R_{\beta}=1$, or a bottom-most corner with $\delta^L_{\alpha}=1$, we have $\wt(T) = \wt(F)$, and $\pi(F \rightarrow F)=1$.

\item\label{top0} For a transition at a top-most corner with $\delta^R_{\beta}=0$ such that the length of the $e$-strip containing it is greater than 1, we have $\wt(T)=\wt(F)$ and $\pi(F \rightarrow T)=1$. Then the top-most corner of $T$ will be an $\alpha$-corner.
\item\label{bottom0} For a transition at a bottom-most corner with $\delta^L_{\alpha}=0$ such that the length of the row containing it is greater than 1, we have $\wt(T)=\wt(F)$ and $\pi(F \rightarrow T)=1$. Then the bottom-most corner of $T$ will be a $\beta$-corner.

\item\label{top1} For a transition at a top-most corner with $\delta^R_{\beta}=0$ such that the length of the $e$-strip containing it is 1, we have $\wt(T)=\frac{1}{\alpha}\wt(F)$ and $\pi(F \rightarrow T)=1$.
\item\label{bottom1} For a transition at a bottom-most corner with $\delta^L_{\alpha}=0$ such that the length of the $d$-strip containing it is 1, we have $\wt(T)=\frac{1}{\beta}\wt(F)$ and $\pi(F \rightarrow T)=1$.

\item\label{right} For a transition at an empty $e$-strip, we have $\wt(T) = \alpha\wt(F)$ and $\pi(F \rightarrow T)=\alpha$. $T$ will not have an empty $e$-strip, and it will have a top-most corner that contains a $\beta$.
\item\label{left} For a transition at an empty $d$-strip, we have $\wt(T) = \beta\wt(F)$ and $\pi(F \rightarrow T)=\beta$. $T$ will not have an empty $d$-strip, and it will have a bottom-most corner that contains an $\alpha$.

\item\label{inner} For a transition at an inner corner, we have $\wt(T) = q\wt(F)$ and $\pi(F \rightarrow T)=q$. 
\item\label{q-trans} For a transition at a $q$-corner, we have $\wt(T) = \frac{1}{q}\wt(F)$ and $\pi(F \rightarrow T)=1$.
\end{enumerate}

Our main theorem is the following.

\begin{thm}
Consider the RAT chain on $\Psi_{(n,r)}$, the RAT equivalence classes of size $(n,r)$. Fix a RAT $F$ and its equivalence class $\psi$. Then the steady state probability of state $\psi$ is proportional to $\wt(F)$.
\end{thm}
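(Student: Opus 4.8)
The plan is to prove that the function $\psi\mapsto\wt(F)$, with $F$ any representative of $\psi$, is up to normalization the stationary distribution of the RAT chain; this is well defined because every weighted flip preserves weight, so all $F\in\psi$ have the same weight. Since $\Psi_{(n,r)}$ is finite and the RAT chain is irreducible — it projects onto the PASEP chain on $\Omega^n_r$, which is irreducible, and the lifting clause in the definition of projection together with Lemma~\ref{equivalence} lets one move between any two classes lying over the same PASEP state — this chain has a unique stationary distribution, so it is enough to check the global balance equations
\[
\wt(F)\sum_{T}\pi(F\to T)=\sum_{G}\wt(G)\,\pi(G\to F)
\]
for one $F$ in each class, where $\pi(\cdot\to\cdot)$ denotes the unnormalized rates and the sums run over classes other than that of $F$ (the holding probabilities cancel). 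Together with Corollary~\ref{cor_projection} this would also recover Theorem~\ref{prob} and identify the normalizing constant as $\mathcal Z_{n,r}$.

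First I would read off the left-hand side from the transition list of Section~\ref{summary}: every corner of $F$ (carrying $\alpha$, $\beta$, or $q$) gives a transition of rate $1$, every inner corner gives rate $q$, an empty $e$-strip gives rate $\alpha$, and an empty $d$-strip gives rate $\beta$; so the left-hand side equals $\wt(F)$ times the total rate of moves out of the PASEP state $W=\pr(F)$.

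Next I would cancel the detailed-balance pairs. A move out of $F$ at an inner corner (Section~\ref{summary}(\ref{inner})) glues on a $q$-tile to produce $T$ with $\wt(T)=q\,\wt(F)$, while the reverse move $T\to F$ (Section~\ref{summary}(\ref{q-trans})) deletes that $q$-tile at rate $1$; thus $\wt(F)\,\pi(F\to T)=\wt(T)\,\pi(T\to F)$, and these two terms cancel on the two sides of the balance equation. Likewise each move out of $F$ at a $q$-corner is detailed-balanced against the incoming move that restores the $q$-tile. After deleting all such pairs the balance equation at $F$ reduces to
\[
\wt(F)\Bigl(\#\{\alpha\text{- and }\beta\text{-corners of }F\}+\alpha\,\mathbf 1_{\{W_1=e\}}+\beta\,\mathbf 1_{\{W_n=d\}}\Bigr)=\sum_{G\to F}\wt(G)\,\pi(G\to F),
\]
where now $G\to F$ runs only over the strip-relocation moves (at $\alpha$- and $\beta$-corners) and the entry/exit moves.

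Finally I would verify this reduced identity by a direct matching: each $\alpha$- or $\beta$-corner of $F$, and each of the two boundary indicator terms, is reproduced by exactly one incoming relocation, entry, or exit move whose weight-times-rate — thanks to the weight ratios $1,\tfrac1\alpha,\tfrac1\beta$ recorded in Section~\ref{summary}(\ref{mid})--(\ref{bottom1}),(\ref{right})--(\ref{left}) — equals the corresponding summand on the left. The main obstacle is pinning down this matching: for a fixed target $F$ one must invert the $\com$/$\bl$ construction to identify precisely which tableau $G$ relocates a $d$- or $e$-strip (or performs the entry/exit) so that the freshly placed $\beta$ or $\alpha$ lands in the prescribed tile of $F$, and the several degenerate cases — a relocated strip of length one, or $W$ of the form $e\,d^{\,n-1}$ or $e^{\,n-1}d$, where a relocation degenerates into something resembling an entry or exit — must be handled individually. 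Everything else is bookkeeping against the transition summary of Section~\ref{summary} and the projection of Proposition~\ref{mc_walk}.
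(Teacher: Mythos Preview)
Your approach is correct and essentially the same as the paper's: both verify the global balance equation $\wt(F)\sum_T\pi(F\to T)=\sum_S\wt(S)\,\pi(S\to F)$ by a case analysis on the features of $F$ (corners, inner corners, empty strips), with the degenerate boundary cases handled separately. The only organizational difference is that you first cancel the $q$-corner/inner-corner pairs via local detailed balance before matching the remaining $\alpha$- and $\beta$-corner and boundary terms, whereas the paper retains all terms on both sides and checks equality of the full sums using the indicators $\delta_R,\delta_L,\delta^R_\beta,\delta^L_\alpha$.
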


\begin{proof}
To prove the theorem, it suffices to show that for each RAT $F$, the following detailed balance condition holds. Let $\mathcal{R}$ be the set of RAT such that there exists a transition from $F$ to $T \in \mathcal{R}$. Let $\mathcal{S}$ be the set of \emph{equivalence classes} of RAT such that for each $\psi \in \mathcal{S}$, there exists some $S \in \psi$ such that there is a transition from $S$ to $F$. Though we actually work with the equivalence classes, we write for simplicity $S \in \mathcal{S}$.

\begin{equation}\label{balance}
\wt(F) \sum_{T \in \mathcal{R}} \pi(F \rightarrow T) = \sum_{S \in \mathcal{S}} \wt(S)\pi(S \rightarrow F).
\end{equation}

Let the RAT $F$ have type $W$. First we treat the transitions going out of $F$ to $T \in \mathcal{R}$. By the construction of the RAT chain, it is clear that there is a transition with probability 1 for every corner (including the top-most-, bottom-most-, middle-, and $q$-corners), a transition with probability $\alpha$ for an empty $e$-strip, a transition with probability $\beta$ for an empty $d$-strip, and a transition with probability $q$ for every inner corner. These transitions directly correspond to all the possible transitions out of the two-species PASEP state $W$. Suppose $F$ has $C_0$ $q$-corners, $C$ $\alpha$- or $\beta$-corners, and $I$ inner corners. Thus we obtain
\begin{equation}\label{LHS}
\sum_{T \in \mathcal{R}} \pi(F \rightarrow T) = C + C_0 + qI + \alpha \delta_L + \beta \delta_R.
\end{equation}

For the transitions going into $F$ from some $S \in \mathcal{S}$, we observe that any transition from one tableau to another ends with a $q$-corner or an $\alpha$- or $\beta$-corner, an empty $e$-strip, an empty $d$-strip, or an inner corner. Thus it is sufficient to examine all such properties of $F$ to enumerate all the possibilities for $S \in \mathcal{S}$.  We examine the pre-image of the cases for the possible transitions going into $F$ to obtain the following cases for $S$.

\begin{enumerate}
\item For a middle corner, a top-most corner with $\delta^R_{\beta}=0$, or a bottom-most corner with $\delta^L_{\alpha}=0$, we have $\wt(S)=\wt(F)$ and $\pi(S \rightarrow F)=1$. This is the inverse of Case \ref{mid} of Section \ref{summary}. This gives a contribution of $\wt(F)(C-2+(1-\delta^R_{\beta})+(1-\delta^L_{\alpha}))$ to the right hand side (RHS) of the detailed balance equation.\footnote{Note that if $C<2$, the formulas we give have some degeneracies. However, it is easy to verify that these do not cause any problems due to cancellation of all the degenerate terms.}
\item For a top-most corner with $\delta^R_{\beta}=1$ and $\delta_R=0$, we have a transition involving an empty $e$-strip of $S$, so $\wt(S)=\frac{1}{\alpha}\wt(F)$ and $\pi(S \rightarrow F)=\alpha$. This is the inverse of Case \ref{top0} of Section \ref{summary}. This gives a contribution of $\alpha\frac{1}{\alpha}\wt(F)\delta^R_{\beta}(1-\delta_R)$ to the RHS of the detailed balance equation.
\item For a bottom-most corner with $\delta^L_{\alpha}=1$ and $\delta_L=0$, we have a transition involving an empty $d$-strip of $S$, so $\wt(S)=\frac{1}{\beta}\wt(F)$ and $\pi(S \rightarrow F)=\beta$. This is the inverse of Case \ref{bottom0} of Section \ref{summary}. This gives a contribution of $\beta\frac{1}{\beta}\wt(F)\delta^L_{\alpha}(1-\delta_L)$ to the RHS of the detailed balance equation.

\item For a top-most corner with $\delta^R_{\beta}=1$ and $\delta_R=1$, there are two possibilities. For the first, $S$ could fall into Case \ref{top0} of Section \ref{summary}, meaning that the top-most corner of $S$ is a $\beta$-corner, which results in the usual transition with $\wt(S)=\wt(F)$. For the second possibility, $S$ could fall into Case \ref{top1} of Section \ref{summary}, meaning that the top-most corner of $S$ is an $\alpha$-corner and the column containing it has length 1. In that case, $\wt(S)=\alpha\wt(F)$. In both situations, $\pi(S \rightarrow F)=1$. We obtain a contribution of $\wt(F)\delta^R_{\beta}\left(\delta_R +\alpha(1-\delta_R)\right)$  to the RHS of the detailed balance equation.

\item For a bottom-most corner with $\delta^L_{\alpha}=1$ and $\delta_L=1$, there are two possibilities. For the first, $S$ could fall into Case \ref{bottom0} of Section \ref{summary}, meaning that the bottom-most corner of $S$ is an $\alpha$-corner, which is the usual transition with $\wt(S)=\wt(F)$. For the second possibility, $S$ could fall into Case \ref{bottom1} of Section \ref{summary}, meaning that $S$ has a bottom-most corner containing a $\beta$ and the row containing it has length 1. In that case, $\wt(S)=\beta\wt(F)$. In both situations, $\pi(S \rightarrow F)=1$.  We obtain a contribution of $\wt(F)\delta^L_{\alpha}\left(\delta_L +\beta(1-\delta_L)\right)$ to the RHS of the detailed balance equation.

\item For a $q$-corner, we have $\wt(S)=\frac{1}{q}\wt(F)$ and $\pi(S \rightarrow F)=q$. This is the inverse of Case \ref{q-trans} of Section \ref{summary}. We obtain a contribution of $\wt(F)$ to the RHS of the detailed balance equation.

\item For an inner corner, we have $\wt(S)=q\wt(F)$ and $\pi(S \rightarrow F)=1$. This is the inverse of Case \ref{inner} of Section \ref{summary}. We obtain a contribution of $q\wt(F)$ to the RHS of the detailed balance equation.

\end{enumerate}

We sum up the contributions to the RHS of the detailed balance equation to obtain
\begin{multline}\label{RHS}
\sum_{S \in \mathcal{S}} \wt(S)\pi(S \rightarrow F) = \wt(F) ( C +C_0 +qI - \delta^R_{\beta} - \delta^L_{\alpha} + \delta^R_{\beta}(1-\delta_R) + \delta^L_{\alpha}(1-\delta_L) \\
+ \delta^R_{\beta}(\delta_R +\alpha(1-\delta_R)) +\delta^L_{\alpha}(\delta_L +\beta(1-\delta_L)) ).
\end{multline}
We see that after simplification, Equation \ref{RHS} equals Equation \ref{LHS}, so indeed the desired Equation \ref{balance} holds for ``most'' $F$, save for the easily-verified degenerate cases. 

\end{proof}

The proof above circumvents the use of the Matrix Ansatz, and is another way to prove our main result of Theorem \ref{prob}.

As an example, we show some of the transitions on the RAT chain for tableaux of size $(4,1)$ in Figure \ref{mc_example}.

\end{document}